\def\@@insvline#1#2{{\setbox0\hbox{\m@th$#1\mathrm I$}  
  \rlap{\m@th$#1 \mkern 5mu  
  \vrule height.95\ht0 depth-.005\ht0 width.09\ht0 $}  
  {\mathrm #2} }}
\def\Q{\mathpalette\@@insvline{Q}}
  \newtheorem{defi}{Definition}
  \newcommand{\bd}{\begin{defi}} 
  \newcommand{\ed}{\end{defi}}
  \newtheorem{lemm}[defi]{Lemma}  
  \newcommand{\bl}{\begin{lemm}}
  \newcommand{\el}{\end{lemm}} 
  \newtheorem{theo}[defi]{Theorem}
  \newcommand{\bt}{\begin{theo}}
  \newcommand{\et}{\end{theo}}
  \newtheorem{cor}[defi]{Corollary}
  \newcommand{\bc}{\begin{cor}}
  \newcommand{\ec}{\end{cor}}
  \newtheorem{pro}[defi]{Proposition}
  \newcommand{\bp}{\begin{pro}}
  \newcommand{\ep}{\end{pro}}
  \newcommand{\bydef}{\overset{def}{=}}
  \def\proof{\@ifnextchar[\opargproof{\opargproof[\bf Proof \hfil\\ ]}}
  \def\opargproof[#1]{\par\noindent {\bf #1 }}
\DeclareFontFamily{OT1}{nice}{}
\DeclareFontShape{OT1}{nice}{m}{n}{<5> <6> <7> <8> <9> <10>
<12><10.95><14.4><17.28><20.74><24.88>callig15}{}
\DeclareFontFamily{U}{nice}{}
\DeclareFontShape{U}{nice}{m}{n}{<5> <6> <7> <8> <9> <10>
<12><10.95><14.4><17.28><20.74><24.88>callig15}{}
\DeclareSymbolFont{calligra}{U}{nice}{m}{n}
\DeclareSymbolFontAlphabet{\nice}{calligra}
\DeclareFontFamily{OT1}{cmdh}{}
\DeclareFontShape{OT1}{cmdh}{m}{n}{<10>cmdunh10}{}
\def\epsilon{\varepsilon}  
\def\phi{\varphi}
\newtheorem{thm}{Theorem}
\newtheorem{rmq}{Remark}
\newtheorem{lemma}{Lemma}
\newtheorem{prop}{Proposition}
\newtheorem{defin}{Definition}
\newtheorem{lemmasec}{Lemma}[section]
\newtheorem{propsec}{Proposition}[section]
\newcommand{\norm}[1]{\left\Vert #1\right\Vert}
\newcommand\blfootnote[1]{%
  \begingroup
  \renewcommand\thefootnote{}\footnote{#1}%
  \addtocounter{footnote}{-1}%
  \endgroup}
\thanks{This Work has been done when the author was a PhD student in the University of Nice-C\^ote d'Azur-France, under the supervision of F.Planchon and P.Dreyfuss. In particular, the author would like to thank his supervisors for the accomplished work.}
\thanks{The author would like to thank Jin Tan for the fruitful discussion and the remarks on the subject.}
\date{\today}
\author[H. Houamed]{Haroune Houamed}
\address{CNRS, LJAD, Université Co\^te d'Azur\\ Département de Mathématiques\\ Nice,  France}
\email{haroune.houamed@univ-cotedazur.fr}
\title[Wellposedness for the Electron Inertial Hall-MHD]{Well-posedness and long time behavior for the electron inertial Hall-MHD system in Besov and Kato-Herz spaces}
\begin{document}

\begin{abstract}  In this paper, we study the wellposedeness of the Hall-magnetohydrodynamic system augmented by the effect of electron inertia. Our main result consists of generalizing the wellposedness one in \cite{Zhao} from the Sobolev context to the general Besov spaces and Kato-Herz space. Then we show that we can reduce the required regularity of the magnetic field in the first result modulo an additional condition on the maximal time of existence. Finally, we show that the $\widehat{L}^p$ (and eventually the $L^p$) norm of the solution $(u,B,\nabla \times B)$ associated to an initial data in $ \widehat{B}^{\frac{3}{p}-1}_{p,\infty}(\mathbb{R}^3)$, is controlled by $t^{-\frac{1}{2}(1-\frac{3}{p})}$, for all $p\in (3,\infty)$, which provides a polynomial decay to zero of the $\widehat{L}^p$  norm of the solution.
\end{abstract}
\maketitle
\blfootnote{{\it keywords:}
Hall-MHD system, MHD equations, Littlewood-Paley, critical spaces, Long time behavior.}  
\blfootnote{AMS Subject Classification (2010): 35Q30, 76D03}
\tableofcontents

\section{Introduction}
\subsection{Overview and connection with other models}
In this paper we consider the incompressible 3D electron inertia Hall-MHD equations derived from the two fluid model of ion and electron
\begin{equation}\label{1.1}
\tag{1} \left\{\begin{array}{l}
 \partial_t u + u \cdot \nabla u - \mu \Delta u + \nabla P=j\times  \big[ B- \delta (1-\delta)\lambda^2 \Delta B \big],\\
 \partial_t B' - \eta\Delta B' +  (1-\delta)\lambda^2 \mu _e \Delta^2 B =  \nabla \times \big\{ \big[ u-(1-\delta)\lambda j \big] \times B' \big\}- \lambda \mu _e \Delta( \nabla \times u), \\
    j= \frac{c}{4 \pi} \nabla \times B,\\
   B'= B- \delta(1-\delta )\lambda^2 \Delta^2 B- \delta \lambda (\nabla \times u),\\
  div\, u = div\, B = 0 \\
  (u_{|t=0},B_{|t=0})= (u_0,B_0).
\end{array}\right.
\end{equation}
Here, $u$ is the hydrodynamic velocity, $B$ the magnetic field, the scalar function $P$ denotes the pressure which can be recovered, due the incompressiblity condition, from the relation $ -\Delta P = \nabla \cdot \big( u\cdot\nabla u - j \times [ B- \delta(1-\delta)\lambda^2 \Delta B] \big)$. $c$ is the speed of light and $j$ denotes the electric current density. $\mu_e$ is the kinematic viscosity of the electron fluid, and if we denote by $\mu_i$ that of the ion fluid, then the kinematic viscosity $\mu$ will be $\mu_i + \mu_e$. The very small parameter $\delta$ is given by $\delta= m_e/M$, where $M=m_e+m_i$, which is the sum of the mass of an ion and of an electron.

If we denote by $e,\, n, \, L_0$ the charge, the number density and the length scale respectively, and if we define $w_M \bydef (4\pi e^2 n / M)^{\frac{1}{2}}$, then $\lambda$ will be $\lambda= c/ w_M L_0$.

As pointed out in the introduction of \cite{Zhao}, the system above can be seen as a full two-fluids MHD description of a completely ionized hydrogen plasma, retaining the effects of the Hall current, electron pressure and electron inertia.
For more details about the derivation of the system, we refer the reader to  \cite{abdelhamid,Zhao}.

One may notice that if we neglect the presence of the electrons in the equations, that is if we set $(\delta,\mu_e)=(0,0)$ in \eqref{1.1}, then it becomes the so-called Hall-MHD system as follows
\begin{equation}\label{hall-mhd}
\tag{$Hall.MHD$} \left\{\begin{array}{l}
 \partial_t u + u \cdot \nabla u - \mu \Delta u + \nabla P=j\times   B ,  \\
 \partial_t B  - \eta\Delta B  +    =  \nabla \times \big\{ ( u- \lambda j ) \times B  \big\},  \\
    j= \frac{c}{4 \pi} \nabla \times B,\\
  div\, u = div\, B = 0, \\
  (u_{|t=0},B_{|t=0})= (u_0,B_0).
\end{array}\right.
\end{equation}
Hence, at least formally, the solution $(u,B)$ to \eqref{hall-mhd} can be seen as the limit of the solution of \eqref{1.1} when $(\delta,\mu_e)$ tends to $(0,0)$. 

Several works have been devoted to the study of the systems \eqref{1.1}, \eqref{hall-mhd} and to the classical MHD as well \footnote{The term $\lambda \nabla \times( j\times B)$ is known in the literature as the hall-term. The classical MHD system is then the case where $\lambda=0$.}. One may see \cite{Zhao,Danchin-Jin,Liu-Jin,Giga-Yoshida,Giga-Ibarhim-AL,Agapito-Schonbek,Chae-Wan-Wu,Wan-Zhou} for more details.   

The main purpose of this paper is to deal with the system \eqref{1.1}. It is easy to check, by making use of the equation satisfied by $(\nabla \times u)$, that \eqref{1.1} can be simplified into the following system 
\begin{equation}\label{H.MHD}
\tag{$\widetilde{MHD}$} \left\{\begin{array}{l}
 \partial_t u + u \cdot \nabla u - \Delta u + \nabla P=(\nabla \times B)\times H\\
 \partial_t H - \Delta H +2 \nabla \times \big( (\nabla \times B)\times H \big)= \nabla \times \big( u \times H\big) + \nabla \times \big( (\nabla \times B)\times (\nabla \times u ) \big)\\
   H= (Id-\Delta) B\\
  div\, u = div\, B = 0 \\
  (u_{|t=0},B_{|t=0})= (u_0,B_0),
\end{array}\right. 
\end{equation}
where we omit the constants which will play no significant role in the proofs of our results. Hence, all the constants in our results below will be modulo $ \mu,\eta, \delta$ and $\lambda$. 

 In a first stage, we aim at generalizing some of the results in \cite{Zhao} from the Hilbert spaces $\dot{H}^s(\mathbb{R}^3)$ to the general Besov spaces of the form $\dot{B}^{s}_{p,r}(\mathbb{R}^3)$. To the best of our knowledge, the wellposedness of \eqref{1.1} in general  \textit{critical} Besov spaces has not been proved before this work\footnote{We will explain in the sequel what do we mean by critical spaces to \eqref{1.1}.}. In the recent work \cite{Zhao}, the authors consider initial data in spaces of the type $H^{s}(\mathbb{R}^3)$. It is then interesting to check if similar results can be proved as well in the context of general Besov spaces having the same scaling, which is the issue treated by our Theorem \ref{thm1} below.

If we try to deal with \eqref{H.MHD} as it is, the structure of some of the nonlinear terms will prevent us from establishing the wellposedness for all \footnote{$p$ refers here to the integrability with respect to space variables in the Besov spaces $\dot{B}^s_{p,r}$} $p \in [1,\infty[$. The issue will be in fact at the level of estimating the remainder terms in Bony's decomposition. Also, it is worth noting that \eqref{H.MHD} does not have a scaling invariance structure as the classical Navier-Stokes equations or the classical MHD equations.  

We recall that, the following spaces 
$$ \dot{B}^{\frac{3}{m}-1}_{ m,n}(\mathbb{R}^3) \hookrightarrow  \dot{H}^\frac{1}{2}(\mathbb{R}^3)\hookrightarrow \dot{B}^{\frac{3}{p}-1}_{ p,r}(\mathbb{R}^3) \hookrightarrow  \dot{B}^{\frac{3}{q}-1}_{ q,r'}(\mathbb{R}^3)\hookrightarrow  \dot{B}^{ -1}_{ \infty,\infty}(\mathbb{R}^3) ,$$
for all $m,n,p,q,r,r'$ satisfying
$$   m\leq  2 \leq p\leq q\leq\infty , \;n\leq  2 \leq r \leq r'\leq\infty ,$$

are critical for the $3$D Navier-Stokes equations. On the other hand, the spaces
$$ \dot{B}^{\frac{3}{m} }_{ m,n}(\mathbb{R}^3) \hookrightarrow  \dot{H}^\frac{3}{2}(\mathbb{R}^3)\hookrightarrow \dot{B}^{\frac{3}{p} }_{ p,r}(\mathbb{R}^3) \hookrightarrow  \dot{B}^{\frac{3}{q} }_{ q,r'}(\mathbb{R}^3)\hookrightarrow  \dot{B}^{  0}_{ \infty,\infty}(\mathbb{R}^3) $$
are critical for the following $3$D system
$$ \partial_t B- \Delta B = - \nabla \times\big((\nabla \times B) \times B \big).$$
Based on this observation, one may ask the following question:  

\textbf{Question:} Can we solve \eqref{1.1} with initial data $(u_0,B_0)$ small enough in the above spaces?

 As mentioned in \cite{Zhao} for \eqref{hall-mhd}, the difficulty in establishing the wellposedness result under the condition that 
 $\norm{u_0}_{\dot{H}^\frac{1}{2}}+\norm{B}_{\dot{H}^\frac{3}{2}}$ being small enough comes from the nonlinear term $\nabla \times (u\times B)$. Up to our knowledge, all the existing results require additional smallness assumptions on $\norm{u_0}_{\dot{H} ^{\frac{1}{2}+\varepsilon}}+\norm{B_0}_{\dot{H} ^\frac{3}{2}}$ as in \cite{Wan-Zhou}, or $\norm{u_0}_{\dot{B} ^{\frac{3}{p}-1}_{p,1}}+\norm{B_0}_{\dot{B} ^{\frac{3}{p}-1}_{p,1}}+\norm{\nabla \times B_0}_{\dot{B} ^{\frac{3}{p}-1}_{p,1}}$ as in \cite{Danchin-Jin,Liu-Jin}. 
 
 In our analysis, we find that \eqref{H.MHD} is easier to deal with due to the flexibility of the operator $(Id-\Delta)^{-1}$ that appears in the Duhamel formula\footnote{See next subsection for the Duhamel formula associated to \eqref{H.MHD}, and Proposition \ref{prop.laplacian} to understand what we mean by the flexibility of this operator $(Id-\Delta)^{-1}$.} of \eqref{H.MHD}. This remarkable feature is a key ingredient in our proofs, and it is in fact one of the reasons why, in our Theorem \ref{thm2} below, we can omit the smallness condition on $\norm{B_0}_{\dot{B} ^{\frac{3}{p}-1}_{p,1}}$ modulo an additional condition on the maximal time of existence. This provides a partial answer to the question above in the case of the Electron Inertial Hall-MHD. 
 
On the other hand, to give a sense to the critical spaces in the case of \eqref{hall-mhd}, the authors in \cite{Danchin-Jin} considered the following ``equivalent" system 
\begin{equation}\label{hall-mhd2}
\tag{$\widetilde{Hall.MHD}$} \left\{\begin{array}{l}
 \partial_t u + u \cdot \nabla u - \mu \Delta u + \nabla P=j\times   B ,  \\
 \partial_t B  - \eta\Delta B       =  \nabla \times \big\{ ( u- \lambda j ) \times B  \big\},  \\
 \partial_t j  - \eta\Delta j       =  \nabla \times\big(\nabla \times \big\{ ( u- \lambda j ) \times curl^{-1}j  \big\}\big),  \\
 j= \nabla \times B,\\
  div\, u = div\, B = 0 ,\\
  (u_{|t=0},B_{|t=0})= (u_0,B_0),
\end{array}\right.
\end{equation}
 where the third equation is obtained from the second one by applying the curl operator. Above, the $curl^{-1}$ is defined as 
 $$ curl^{-1}j\bydef \mathcal{F}^{-1}\bigg(\frac{i \xi \times \widehat{j}}{|\xi|^2}\bigg).$$
 The system \eqref{hall-mhd2} has then a \textit{scaling invariance}, which is exactly the same as that of the $3D$ Navier-Stokes equations and the classical MHD system.

Hence, taking into account the fact that \eqref{hall-mhd} can be seen as the limit system of \eqref{1.1} when $(\delta,\mu_e)$ vanishes, a space will be called critical to \eqref{1.1} if it is critical to \eqref{hall-mhd} in the sense explained above and in \cite{Danchin-Jin}.  

We conclude this subsection by pointing out an interesting issue: One may notice that in our first result below for \eqref{1.1} (see Theorem \ref{thm1}) we will be considering data $u_0,B_0,\nabla \times  B_0$ in $\dot{B}^{\frac{3}{p}-1}_{p,r}$, where $r$ can take any value in $[1,\infty]$. On the other hand, for \eqref{hall-mhd} and as mentioned in \cite{Danchin-Jin}, proving the wellposedness in such spaces remains an open question for the time being, except for the case $p=2$ or $r=1$. As explained in \cite{Danchin-Jin}, one may benefit from some cancellations in the case $p=2$ to get ride of the most nonlinear term in the equation of $J$. Whereas, in the case $p\neq2$, the index $r$ is needed to be equal to $1$ because, at some point, they need to estimate a product of two functions in the critical Besov space $\dot{B}^{\frac{3}{p}}_{p,r}$.\footnote{ Fore more details on that we suggest to see in particular the estimates $(3.5)$ and $(3.6)$ in \cite{Danchin-Jin}.} This can be completely  avoided in our case, thanks again to the flexibility of the operator $(Id-\Delta)^{-1}$. 

 We recall that, in the present work, we are not interested in studying the asymptotic limit of the solutions to \eqref{1.1} when $(\delta,\mu_e)$ vanish. But  to be more precise and rigorous, we should point out that, if we keep all the constants in our analysis, the operator $(Id-\Delta)^{-1}$ will be in fact $(Id-\delta(1-\delta)\lambda\Delta)^{-1}$, which means that, when $\delta$ vanishes, we do not believe that we can give an answer to the open question in \cite{Danchin-Jin} by studying the limit (as $(\delta,\mu_e)$ vanish) of our solution given by Theorem \ref{thm1}.
\subsection{Reformulation of the equations}
We recall again that, for simplification, we will prove our theorems for \eqref{H.MHD} instead of \eqref{1.1}, whereas we should point out that all our results below hold as well for \eqref{1.1}. 

Let us then first rewrite the system \eqref{H.MHD} in an appropriate form. To do so, we recall some vectorial concepts.\\
For $U,V $ two divergence-free vector fields on $\mathbb{R}^3$, we have
\begin{equation}\label{vect1}
\nabla \times ( U \times V) = V \cdot \nabla U - U \cdot \nabla V,
\end{equation}
\begin{equation}\label{vect2}
(\nabla \times U) \times U = U \cdot \nabla U - \frac{1}{2} \nabla |U|^2,
\end{equation}
\begin{equation}\label{vect3}
 V\times (\nabla \times U)   +U\times(\nabla \times V)   =- \nabla \times ( U \times V)   - 2 U \cdot \nabla V + \nabla (U\cdot V),
\end{equation} 
\begin{equation}\label{vect4}
\nabla \times (\nabla  \times U) = -\Delta U .
\end{equation}
If we denote $J \bydef \nabla \times B$, then according to \ref{vect2} and \ref{vect4}, we obtain $$ ( \nabla \times B) \times (B - \Delta B) =B \cdot \nabla B - J\cdot \nabla J + \nabla \bigg( \frac{|B|^2 - |J|^2}{2} \bigg). $$
On the other hand, we have 
$$ u \times H  +  (\nabla \times B)\times (\nabla \times u ) =u \times B  +  u\times (\nabla \times J)+ J\times (\nabla \times u ). $$
Thus according to \eqref{vect3}, we infer that
\begin{equation}
 u \times H  +  (\nabla \times B)\times (\nabla \times u ) = u \times B  - \nabla \times (J \times u) - 2 J \cdot \nabla u + \nabla ( J \cdot u).
\end{equation}
Therefore, \eqref{H.MHD} can be written as follows
\begin{equation}\label{SS}
\left\{\begin{array}{l}
 \partial_t u - \Delta u =B \cdot \nabla B - J\cdot \nabla J -  u \cdot \nabla u  + \nabla \tilde{P} \\
 \partial_t B - \Delta B  =   (Id-\Delta)^{-1}\nabla \times \Theta \\
  \partial_t J - \Delta J  =  -\Delta (Id-\Delta)^{-1}  \Theta + \nabla (Id-\Delta)^{-1}(\nabla \cdot \Theta) \\
  div\, u = div\, B =div J= 0 \\
  (u ,B,J)_{|t=0}= (u_0,B_0,\nabla \times B_0),
\end{array}\right.  \tag{$S_1$}
\end{equation}
where, $$ \tilde{P}\bydef -p + \frac{|B|^2 - |J|^2}{2},$$
and
$$\Theta \bydef  u \times B -2 B \cdot \nabla B - J\cdot \nabla J  - \nabla \times (J \times u) - 2 J \cdot \nabla u + \nabla (J\cdot u).$$
One may notice that, in the expression of $\Theta $ above, the term $u\times B$ is not at the same level of scaling compared to the rest of quantities, besides that the non-homogeneous behavior of the operator $(Id-\Delta)^{-1}$ destroys any chances for \eqref{SS} to have a scaling invariant structure as the classical Navier-Stokes has for example, or as in the case of the system \eqref{hall-mhd2} studied in \cite{Danchin-Jin}.\\
The justification of the choice of the functional framework in which we do our analysis is then made by working in critical spaces as explained in the previous subsection.\\

Let us now  write down Duhamel's formula corresponding to \eqref{SS}. Let $\mathbb{P}$ be the Leray projector, we write
 $$ \mathcal{Q} (u,v ) \bydef  \nabla \cdot\big( u \otimes v  \big) , $$
 $$\mathcal{P}(u,v ) \bydef  u\times v ,$$
 $$\mathcal{R}(u,v) \bydef \nabla \times ( u \times v), $$
 for a three dimensional vectors $K= (K_1,K_2,K_3)$, $L=(L_1,L_2,L_3)$, we define
$$ \Gamma (K,L)\bydef \begin{pmatrix}
 \mathcal{Q} (K_2, L_2)- \mathcal{Q} (K_1, L_1) - \mathcal{Q} (K_3, L_3)  \\
 \mathcal{P}(K_1 ,L_2  ) - \mathcal{R}( k_3,L_1) -    \mathcal{Q} (K_3, L_3) -2 \mathcal{Q} (K_2, L_2)-2  \mathcal{Q} (K_3, L_1)\\
  \mathcal{P}(K_1 ,L_2  ) - \mathcal{R}( k_3,L_1) -    \mathcal{Q} (K_3, L_3) -2 \mathcal{Q} (K_2, L_2)-2  \mathcal{Q} (K_3, L_1)
\end{pmatrix},$$
\begin{equation}\label{omega expression}
 \Omega (K,L) \bydef \mathbb{P}\begin{pmatrix}
\Gamma_1 (K,L)\\
(Id-\Delta)^{-1} \nabla \times \Gamma_2 (K,L)\\
-\Delta( Id-\Delta)^{-1} \Gamma_3 (K,L)
\end{pmatrix} ,
\end{equation}
and
\begin{equation}\label{zeta definition}
\zeta (K,L)\bydef \mathcal{K} \Omega (K,L),
\end{equation}
where $$\mathcal{K} \varphi(t,\cdot)= \int_0^t e^{(t-s)\Delta} \varphi (s,\cdot)ds.$$
Finally, if we write $ \mathcal{U} \bydef (u,B,J)$, then \eqref{SS} is equivalent to
\begin{equation}\label{S zeta}
\left\{\begin{array}{l}
 \partial_t \mathcal{U} - \Delta \mathcal{U} = \zeta(\mathcal{U}, \mathcal{U}) \\
  div\, u = div\, B =div J= 0 \\
   \mathcal{U} _{|t=0}= \mathcal{U}_0
\end{array}\right. \tag{$S_{\zeta}$}
\end{equation}
\begin{rmq}
As mentioned in a paper of I.Gallagher, D.Iftime and F.Planchon \cite{Planchon-assympto}  for the classical Navier Stokes equations, the theory of weak solutions to the Navier Stokes equations is related to the special structure of the equation, namely to the energy inequality, while the Kato's approach is more general and can be applied to more general parabolic or dispersive equations, this work is an example among many. The main issue here consists in writing the equations in an appropriate form in order to be able to adapt the techniques used for the classical Navier-Stokes \cite{Chemin1,Planchon-assympto,Leray1,Danchin2}.
\end{rmq}
\section{Preliminaries and statement of the main results}
\subsection{Functional framework and notations}
In this subsection, we set up some notations, definitions and the functional spaces in which we do our analysis. We shall provide some properties of these spaces in Appendix part of the paper.\\

Let us begin by recalling the Littlewood-Paley decomposition and the associated Besov spaces
\noindent Let $(\psi,\varphi)$ be a couple of smooth functions with value in $[0,1]$ satisfying: 
\begin{alignat*}{2}
&\text{Supp } \psi \subset \{ \xi \in \mathbb{R} : |\xi| \leq \frac{4}{3}\}, 
\quad &&\text{Supp } \varphi \subset \{\xi \in \mathbb{R} :\frac{3}{4} \leq |\xi| \leq \frac{8}{3} \} \\
&\psi(\xi) + \sum_{q\in \mathbb{N}} \varphi(2^{-q}\xi) = 1 \;\; \forall \xi \in \mathbb{R}, 
\quad 
&&\sum_{q\in \mathbb{Z}} \varphi(2^{-q}\xi) = 1 \;\; \forall \xi \in \mathbb{R}\backslash \{0\}.
\end{alignat*}
Let $a$ be a tempered distribution, $\widehat{a}=\mathcal{F}(a)$ its Fourier transform and $\mathcal{F}^{-1}$ 
denotes the inverse of $\mathcal{F}$. We define the homogeneous dyadic blocks $\Delta_q$ by setting:
\begin{alignat*}{2}
&\Delta _q a \bydef \mathcal{F}^{-1}\big(\varphi (2^{-q}|\xi | \widehat{a} ) \big)\;  , 
\quad && S_q  \bydef  \sum_{j<q} \Delta_j  \ \forall q \in \mathbb Z.
\end{alignat*}
Although the previous sections, we used the Bony's decomposition which reads as follows, for tempered distributions $u$ and $v$, we have
$$uv = T_uv + T_v u + R(u,v),$$
with
$$T_uv \bydef \sum_{j\in \mathbb{Z}} S_{j-1} u \Delta_j u, \;\;\;R(u,v) \bydef \sum_{j\in \mathbb{Z}}  \tilde{\Delta}_j u \Delta _j v,$$
where $\tilde{\Delta}_j \bydef \sum_{i\in \{-1,0,1\}} \Delta_{j+i}.$\\
According to the support properties above, we have
$$\Delta _q T_uv = \Delta_q \sum_{ j\sim q} S_{j-1} u \Delta_j u $$
$$\Delta _q R(u,v) = \Delta_q \sum_{j\geq q+N_0}  \tilde{\Delta}_j u \Delta _j v, $$
for some $N_0 \in \mathbb{Z}.$\\
Based on the dyadic decomposition presented above, we recall the definition of the usual Besov spaces on $\mathbb{R}^d$ and the Chemin-Lerner spaces defined on $\mathbb{R}^+\times \mathbb{R}^d$. 
\begin{defin}
Let $s $ be a real number and $p,r$ be in $[1,+\infty]$, we define the space $ B^s_{p,r}(\mathbb{R}^d) $ as the space of tempered distributions $u $ in $\mathcal{S}(\mathbb{R}^d)$ such that 
$$
\norm u _{\dot{B}^s_{p,r}}\bydef \norm {  2^{js} \norm { \Delta_j  u}_{L^p}}_{  \ell_j^{r}(\mathbb{Z})) } < \infty 
$$
And for $\rho\in [1,\infty]$, the space $\tilde{L}^\rho(\dot{B}^s_{p,r}) $ is the space of the tempered distributions $f$ in $\mathcal{S}(\mathbb{R}^+\times \mathbb{R}^d)$ such that
$$\norm f _{\tilde{L}_T^\rho(\dot{B}^s_{p,r})}\bydef   \norm{2^{js} \norm { \Delta_j  u}_{L^\rho_T L^p}}_{  \ell_j^{r}(\mathbb{Z}) }   < \infty  $$
\end{defin}
Next, we recall the definition of Kato spaces, then we introduce the Kato-Herz and the Fourier-Herz spaces used in Theorem \ref{thm3}, for more details about the Fourier-Herz spaces the reader can see for instance \cite{Chikami}.
\begin{defin}[Kato spaces]
 Let $ p$ be in $ [1,\infty]$, $\sigma \in \mathbb{R}_+^*$, we define the space $K^\sigma_{p,r}(T)$ (or simply $K^\sigma_{p,r} $ when $T=\infty$), as the space of functions $u$ on $\mathbb{R}^+ \times \mathbb{R}^d$, such that 
 $$ \norm u _{K^\sigma_{p,r}(T)} \bydef \norm { t^{\frac{\sigma}{2}} \norm {u(t,\cdot)}_{L^p(\mathbb{R}^d)}}_{L^r\big((0,T); \frac{dt}{t}\big)}$$
 the case $r=\infty$, we simply denote $K^\sigma_{p,\infty} = K^\sigma_{p }$, such that
 $$ \norm u_{K^{\sigma}_{p }(T)} \bydef \underset { t\in (0,T)}{\sup} \big  \{  t^{\frac{\sigma}{2}} \norm {u(t,\cdot)}_{L^p(\mathbb{R}^d})\big   \} < \infty$$
\end{defin}
\begin{defin}[\textit{Fourier-Herz and Kato-Herz spaces}]
Let $ p,r$ be in $ [1,\infty]$, $(s,\sigma) \in \mathbb{R}_+^* \times\mathbb{R}$,
we define the Fourier-Herz space $\widehat{B}^s_{p,r}(\mathbb{R}^d)$ as the space of tempered distribution $w$ on $\mathbb{R}^d$ such that
$$ \norm u _{\widehat{B}^s_{p,r}}\bydef \norm {  2^{js} \norm { \widehat{\Delta_j  u}}_{L^{p'}}}_{  \ell_j^{r}(\mathbb{Z})) } < \infty, $$
 and we define $\widehat{K}^\sigma_{p,r}(T)$ (or simply $\widehat{K}^\sigma_{p,r} $ when $T=\infty$), as the space of functions $u$ on $\mathbb{R}^+ \times \mathbb{R}^d$, such that 
 $$ \norm u _{\widehat{K}^\sigma_{p,r}(T)} \bydef \norm { t^{\frac{\sigma}{2}} \norm {\widehat{u}(t,\cdot)}_{L^{p'}(\mathbb{R}^d)}}_{L^r\big((0,T); \frac{dt}{t}\big)},$$
 the case $r=\infty$, we simply denote $\widehat{K}^\sigma_{p,\infty} = \widehat{K}^\sigma_{p }$, such that
 $$ \norm {u}_{\widehat{K}^{\sigma}_{p }(T)} \bydef \underset { t\in (0,T)}{\sup} \big  \{  t^{\frac{\sigma}{2}} \norm {\widehat{u}(t,\cdot)}_{L^{p'}(\mathbb{R}^d})\big   \} < \infty$$
\end{defin}
\begin{rmq}
In terms of the scaling, the  Fourier-Herz space $\widehat{B}^s_{p,r}$ (resp. the  Kato-Herz space $\widehat{K}^s_{p,r}$) has the same scale as the usual Besov $B^s_{p,r}$ (resp. the usual Kato $K^s_{p,r}$).
\end{rmq}

Along this work, we will be using the following notations
\begin{itemize}
\item  For $A,B$ two real quantities, $A\lesssim B$ means $A\leq c B$, for some $c>0$ independent of $A$ and $B$.
\item $(c_{j,r})_{j\in\mathbb{Z}} $ will be a sequence satisfying $ \sum_{j\in \mathbb{Z}} c_{j,r}^r  \leq 1$. This sequence is allowed to differ from a line to line, also let us point out that, due to the embedding $\ell^r(\mathbb{Z}) \in \ell ^\infty (\mathbb{Z})$, we will often use the inequality $c_{j,r}^2 \subset c_{j,r}$.
\item The $\widehat{L}^p$ norm of $u$ is given by
$$\norm {u}_{\widehat{L}^p} \bydef \norm {\widehat{u}}_{L^{p'}}, $$    where  $p' $ is the usual conjugate of  $ p,$  and$ \widehat{u}$  denotes the Fourier transform of $u.$
\item We use the notation
$$\mathscr{L} (\dot{B}_{p,r}^{s }) \bydef \displaystyle\underset{{\rho \in[1,\infty]}}{\bigcap}\tilde{L}^\rho(\mathbb{R}^+;\dot{B}^{s +\frac{2}{\rho}}_{p,r}),$$
and  for $T>0$, $$\mathscr{L}_{T} (\dot{B}_{p,r}^{s }) \bydef \displaystyle\underset{{\rho \in[1,\infty]}}{\bigcap}\tilde{L}^\rho([0,T];\dot{B}^{s +\frac{2}{\rho}}_{p,r}).$$
\end{itemize}
Let us now define what we mean by a solution to \eqref{S zeta} in this paper
\begin{defin}\label{definition solution}
Let $T>0$, and $\mathcal{U}_0 $  be given in some Banach space $\mathcal{X} $, we say that $\mathcal{U} $ is a solution to \eqref{S zeta} on $(0,T)$ if $\mathcal{U}  \in L^\infty_{loc}([0,T]; \mathcal{X}  )$ and satisfies, for a.e $t\in [0,T]$
 $$
 \displaystyle \mathcal{U}(t,\cdot) =  e^{t\Delta}\mathcal{U}_0(\cdot) + \zeta(\mathcal{U},\mathcal{U}), \;\; \text{ in }  \mathcal{X}  .$$
\end{defin}
\subsection{Statement of the main results}
\noindent The authors in \cite{Zhao} proved the wellposedness of \eqref{H.MHD} under the condition $\norm {u_0}_{\dot{H}^\frac{1}{2}} + \norm {B_0}_{\dot{H}^\frac{1}{2}} + \norm {B_0}_{\dot{H}^\frac{3}{2}} $ being small enough. Our first result consists of generalizing this last one to the Besov context, it reads as follows
\begin{thm}\label{thm1}
Let $p\in [1, \infty), \; r\in [1,\infty]$ and $\mathcal{U}_0=(u_0,B_0, \nabla \times B_0)$ be in  $\dot{B}^{\frac{3}{p}-1}_{p,r}(\mathbb{R}^3) $.  \\
There exists $c_0>0$ such that, if $$\norm {\mathcal{U}_0}_{\dot{B}^{\frac{3}{p}-1}_{p,r}}< c_0,$$
then \eqref{S zeta} has a unique global solution $\mathcal{U}$ in $\mathscr{L} (\dot{B}^{\frac{3}{p}-1}_{p,r} )$, with
$$\norm {\mathcal{U}} _{\mathscr{L} (\dot{B}^{\frac{3}{p}-1}_{p,r} )} < 2c_0.$$
\end{thm}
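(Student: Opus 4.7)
The plan is to establish Theorem~\ref{thm1} by a Kato-type fixed-point argument applied to the Duhamel formulation of $(S_\zeta)$, working in the critical Chemin--Lerner intersection space $X \bydef \mathscr{L}(B^{3/p-1}_{p,r})$. I look for a fixed point of
\[
\Phi(\mathcal{U}) \bydef e^{t\Delta}\mathcal{U}_0 + \zeta(\mathcal{U},\mathcal{U})
\]
inside a small closed ball of $X$, so that the Banach contraction theorem delivers a unique global solution satisfying the announced bound $\|\mathcal{U}\|_X < 2c_0$.

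Two estimates carry the argument. The linear bound $\|e^{t\Delta}\mathcal{U}_0\|_X \lesssim \|\mathcal{U}_0\|_{B^{3/p-1}_{p,r}}$ is standard: applying dyadic blocks, the heat smoothing $\|e^{t\Delta}\Delta_j f\|_{L^p} \lesssim e^{-ct\, 2^{2j}}\|\Delta_j f\|_{L^p}$ delivers, after taking the $L^\rho_t$ norm, the $2^{-2j/\rho}$ gain that is built into the definition of $\mathscr{L}$ uniformly in $\rho \in [1,\infty]$. The core of the proof is the bilinear estimate $\|\zeta(K,L)\|_X \lesssim \|K\|_X \|L\|_X$. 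Since $\zeta = \mathcal{K}\Omega$ and $\mathcal{K}$ gains two derivatives in $\tilde{L}^1_t$, it suffices to show
\[
\|\Omega(K,L)\|_{\tilde{L}^1_t(B^{3/p-1}_{p,r})} \lesssim \|K\|_X \|L\|_X.
\]
The operator $\Omega$ is composed of the Leray projector, the Fourier multipliers $(Id-\Delta)^{-1}\nabla\times$ and $-\Delta(Id-\Delta)^{-1}$ (order zero at high frequency, harmless on each dyadic block by Bernstein), and the quadratic blocks $\mathcal{Q}(K_i,L_j) = \nabla\cdot(K_i\otimes L_j)$, $\mathcal{R}(K_i,L_j) = \nabla\times(K_i\times L_j)$ and $\mathcal{P}(K_i,L_j) = K_i\times L_j$. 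Bony's decomposition together with standard paraproduct continuity reduces everything to product laws of the form
\[
\|fg\|_{\tilde{L}^1_t(B^{3/p-1}_{p,r})} \lesssim \|f\|_{\tilde{L}^2_t(B^{3/p}_{p,r})}\|g\|_{\tilde{L}^2_t(B^{3/p-1}_{p,r})},
\]
where the required $\tilde{L}^2_t(B^{3/p}_{p,r})$ regularity is obtained from $X$ by interpolation in the time exponent.

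The main obstacle lies in the third component of $\Omega$, where $\mathcal{P}(K_1,L_2) = K_1\times L_2$ appears without any internal divergence or curl structure and is shielded only by the order-zero prefactor $-\Delta(Id-\Delta)^{-1}$; the whole derivative gain needed to close the critical estimate must therefore come from the $\mathcal{K}$-smoothing together with a careful Chemin--Lerner distribution of two derivatives between the two factors. This is also the delicate point in extending the range to every $p\in[1,\infty)$, because for $p > 3$ one has $3/p-1 \le 0$ and the Besov algebra property is lost; one compensates by distributing regularity through Hölder in time between the extreme pieces $\tilde{L}^\infty_t(B^{3/p-1}_{p,r})$ and $\tilde{L}^1_t(B^{3/p+1}_{p,r})$ of the intersection $X$, and by using the remainder bound $\|R(f,g)\|_{B^{s+t}_{p,r}} \lesssim \|f\|_{B^s_{p,r_1}}\|g\|_{B^t_{p,r_2}}$ in the regime $s+t>0$. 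Once this bilinear estimate is obtained with constant $C$, choosing $c_0 \le (4C)^{-1}$ forces $\Phi$ to map the ball of radius $2c_0$ in $X$ into itself as a $1/2$-Lipschitz map, yielding existence, uniqueness in that ball, and the stated bound; since $X$ is defined on all of $\mathbb{R}^+$, the solution is automatically global.
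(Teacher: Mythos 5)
Your overall frame --- fixed point in $X=\mathscr{L}(B^{\frac{3}{p}-1}_{p,r})$ via the heat estimates and Bony's decomposition --- is the same as the paper's, but there is a genuine gap exactly at the point you single out as the main obstacle, namely the zero-order term $\mathcal{P}(K_1,L_2)=u\times B$. You declare the multipliers $(Id-\Delta)^{-1}\nabla\times$ and $-\Delta(Id-\Delta)^{-1}$ ``order zero at high frequency, harmless,'' and then claim that the missing derivative is to be recovered from the $\mathcal{K}$-smoothing together with a distribution of derivatives between the two factors. This mechanism cannot work. First, $\mathcal{K}$ maps $\tilde{L}^1(B^{s}_{p,r})$ into $\mathscr{L}(B^{s}_{p,r})$ without ever lowering the base regularity $s$, and in this homogeneous scale $\mathscr{L}(B^{\frac{3}{p}}_{p,r})$ is not contained in $\mathscr{L}(B^{\frac{3}{p}-1}_{p,r})$. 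Second, no Chemin--Lerner distribution of derivatives can yield $\norm{u\times B}_{\tilde{L}^1(B^{\frac{3}{p}-1}_{p,r})}\lesssim \norm{u}_{X}\norm{B}_{X}$: under the scaling $f\mapsto \lambda f(\lambda^2 t,\lambda x)$ the right-hand side is invariant while the left-hand side picks up a factor $\lambda^{-1}$, so the inequality fails at low frequencies no matter how you split regularity. The scaling-correct statement is only $\norm{u\times B}_{\tilde{L}^1(B^{\frac{3}{p}}_{p,r})}\lesssim \norm{u}_{X}\norm{B}_{X}$ (this is the $\mathcal{P}$ estimate of Proposition \ref{prop.continuity}), and what brings this down to $\tilde{L}^1(B^{\frac{3}{p}-1}_{p,r})$ in the paper is precisely the \emph{low-frequency vanishing} of the two multipliers you discarded: their symbols behave like $|\xi|$ and $|\xi|^2$ near $\xi=0$, so by Proposition \ref{prop.laplacian} (inequality \eqref{inequality 1.prop laplace} with $k=0$, inequality \eqref{inequality 2.prop laplace} with $k=1$) they boundedly lower the regularity index by one. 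The derivative gain lives in the $(Id-\Delta)^{-1}$ structure, not in $\mathcal{K}$.

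A second, independent defect is your model product law $\norm{fg}_{\tilde{L}^1(B^{\frac{3}{p}-1}_{p,r})}\lesssim \norm{f}_{\tilde{L}^2(B^{\frac{3}{p}}_{p,r})}\norm{g}_{\tilde{L}^2(B^{\frac{3}{p}-1}_{p,r})}$: in the remainder term the total regularity is $\frac{6}{p}-1$, which is nonpositive for $p\geq 6$ (and already for $p\geq 3$ if you embed one factor into an $L^\infty$-based space rather than using $L^{p/2}$--Bernstein), so for large $p$ you have reduced the proof to a false estimate --- you even cite the constraint $s+t>0$ without noticing that your split violates it, while Theorem \ref{thm1} covers all $p\in[1,\infty)$. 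The paper avoids this by the symmetric split $\tilde{L}^4(B^{\frac{3}{p}-\frac{1}{2}}_{p,r})\times \tilde{L}^{\frac{4}{3}}(B^{\frac{3}{p}+\frac{1}{2}}_{p,r})$, whose regularities sum to $\frac{6}{p}>0$ for every finite $p$. (Also, your remainder bound as written, with target $B^{s+t}_{p,r}$ from two $L^p$-based factors, is off by $\frac{3}{p}$: the correct target is $B^{s+t-\frac{3}{p}}_{p,r}$.) With these two repairs --- the low-frequency multiplier gain of Proposition \ref{prop.laplacian} and the symmetric regularity split --- your argument collapses onto the paper's proof, and the conclusion via Lemma \ref{lemma fixed point} is then as you state.
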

\begin{rmq}
One may show that the solution, in the case $r<\infty$, is continuous in time with value in $\dot{B}^{\frac{3}{p}-1}_{p,r}$, while in the case $r=\infty$ it is just weakly-continuous in time.
\end{rmq}
\begin{rmq}
 One may prove a local in time wellposedeness for large initial data, by slightly modifying the proof of Theorem \ref{thm1}, we will give some details about that in Corollary \ref{corollary local existence}.
\end{rmq}
\noindent In terms of the required regularity, in Theorem \ref{thm1} we ask for the initial data of $B$ to be in $\dot{B}^{\frac{3}{p}-1}_{p,r}(\mathbb{R}^3) \cap \dot{B}^{\frac{3}{p} }_{p,r}(\mathbb{R}^3) $. 

It is worth noting that, it is because of the two non linear terms $u\times B$ in the equation of $B$, and $ B \cdot \nabla B$ in the equation of $u$, that we do not know how to prove an analogous result to Theorem \ref{thm1}, starting from initial data $B_0$ only in $\dot{B}^{\frac{3}{p}}_{p,r}$. 

However, in the case $r=1$, we will prove that a small enough ``compared to the maximal time of existence $T^*$" initial data $B_0$ in $\dot{B}^{\frac{3}{p}}_{p,1}$ should generate a unique solution, at least up to time $T^*$. More precisely, we will prove
\begin{thm}\label{thm2}
Let $T>0$, $p\in [1,\infty)$ and $(u_0,B_0)$ be two divergence-free vector fields in $\dot{B}^{\frac{3}{p}-1}_{p,1}(\mathbb{R}^3) \times \dot{B}^{\frac{3}{p} }_{p,1}(\mathbb{R}^3)$, there exists $ c_0>0$ such that if
$$ \norm {u_0}_{\dot{B}^{\frac{3}{p}-1}_{p,1}} + (2+T) \norm {B_0}_{\dot{B}^{\frac{3}{p}}_{p,1}} < c_0,$$
then \eqref{S zeta} has a unique solution $ (u,B)$ on $(0,T)$ with
$$ \norm {u}_{\mathscr{L}_{T} (\dot{B}^{\frac{3}{p}-1}_{p,1}  )} + (2+T)\norm {B}_{\mathscr{L}_{T} ( \dot{B}^{\frac{3}{p}}_{p,1} )} < 2c_0.$$
\end{thm}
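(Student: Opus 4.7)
The plan is to run a Picard fixed-point scheme for the Duhamel formulation $(S_\zeta)$ in the weighted ball
$$
\mathcal{B}_T := \Big\{(u,B)\ :\ \norm{u}_{\mathscr{L}_T(B^{3/p-1}_{p,1})} + (2+T)\norm{B}_{\mathscr{L}_T(B^{3/p}_{p,1})} < 2c_0\Big\},
$$
interpreting $J:=\nabla\times B$, so that the embedding $\norm{J}_{\mathscr{L}_T(B^{3/p-1}_{p,1})} \lesssim \norm{B}_{\mathscr{L}_T(B^{3/p}_{p,1})}$ comes for free and $(u,B,J)$ lives in the same functional scale as in Theorem~\ref{thm1}, only with a different weighting. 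Standard parabolic estimates for the heat semigroup give
$$
\norm{e^{t\Delta}u_0}_{\mathscr{L}_T(B^{3/p-1}_{p,1})} + (2+T)\norm{e^{t\Delta}B_0}_{\mathscr{L}_T(B^{3/p}_{p,1})} \lesssim \norm{u_0}_{B^{3/p-1}_{p,1}} + (2+T)\norm{B_0}_{B^{3/p}_{p,1}} < c_0,
$$
so the free evolution already sits inside a fixed fraction of the ball.

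For the nonlinear map $\Phi$ I would classify the quadratic terms of $\zeta$ defined in \eqref{omega expression} into three groups. The purely-$u$ terms (such as $u\cdot\nabla u$) are treated verbatim as in Theorem~\ref{thm1} and contribute $\lesssim c_0^2$ to the $u$-slot. The purely-$(B,J)$ terms ($B\cdot\nabla B$ and $J\cdot\nabla J$) exploit the algebra property of $B^{3/p}_{p,1}$; a simple Hölder in time gives e.g.
$$
\norm{B\cdot\nabla B}_{\tilde{L}^1_T(B^{3/p-1}_{p,1})} \lesssim T\,\norm{B}_{\tilde{L}^\infty_T(B^{3/p}_{p,1})}^2 \lesssim \frac{T\, c_0^2}{(2+T)^2},
$$
which is comfortably absorbed into $\norm{u}_{\mathscr{L}_T(B^{3/p-1}_{p,1})}$. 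The delicate terms are the mixed ones inside $\Theta$, namely $u\times B$, $\nabla\times(J\times u)$, $J\cdot\nabla u$ and $\nabla(J\cdot u)$: here $u$ lies at the critical scale $B^{3/p-1}_{p,1}$ while $B$ (hence $J$) is one derivative higher, and since $B_0\notin B^{3/p-1}_{p,1}$ in general, no $B^{3/p-1}$-control of $B$ is available. Bony's decomposition together with a Moser-type inequality yields $\norm{u\times B}_{B^{3/p-1}_{p,1}} \lesssim \norm{u}_{B^{3/p-1}_{p,1}}\norm{B}_{B^{3/p}_{p,1}}$; since $\mathscr{L}_T(B^{3/p}_{p,1})$ does not embed into $\tilde{L}^1_T(B^{3/p}_{p,1})$, the best one can do in time is the crude bound
$$
\norm{u\times B}_{\tilde{L}^1_T(B^{3/p-1}_{p,1})} \lesssim T\,\norm{u}_{\tilde{L}^\infty_T(B^{3/p-1}_{p,1})}\norm{B}_{\tilde{L}^\infty_T(B^{3/p}_{p,1})}.
$$
Composing with $\mathcal{K}\circ(Id-\Delta)^{-1}\nabla\times$, which gains one derivative at high frequencies, this contributes $\lesssim T c_0^2/(2+T)$ to $\norm{B}_{\mathscr{L}_T(B^{3/p}_{p,1})}$; the weight $(2+T)$ in the working norm is precisely engineered to trade against this $T$, so that after choosing $c_0$ small enough in the smallness hypothesis the full bilinear contribution stays below $c_0$ and $\Phi$ maps $\mathcal{B}_T$ into itself.

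Applying the same bilinear bounds to the differences $(u_1-u_2,B_1-B_2)$ yields a Lipschitz constant of order $c_0$ for $\Phi$ in the weighted norm, and Banach's fixed-point theorem produces the unique solution $(u,B)\in\mathcal{B}_T$ satisfying the required estimate. I expect the main obstacle to be the bookkeeping in the mixed bilinear estimates: one has to make sure that the various Bony, Moser, and parabolic smoothing constants conspire so that every extra factor of $(2+T)$ picked up in the $B$-equation is matched by exactly one factor of $T$ gained when trading $\tilde{L}^\infty_T(B^{3/p}_{p,1})$ for $\tilde{L}^1_T(B^{3/p}_{p,1})$; otherwise one would be left with an un-absorbable $(2+T)^2$ and would have to strengthen the smallness assumption beyond what the statement allows.
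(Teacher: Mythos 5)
Your overall architecture (weighted ball, $J=\nabla\times B$, coupled Picard scheme, heat estimates for the free part) matches the paper's, which formalizes the weighted bookkeeping in Lemma \ref{lemma fixed point2} via the substitution $z=(1+T)y$. But there is a genuine gap in your treatment of the mixed terms, and it is fatal. You bound $u\times B$ (and its cousins $\mathcal{R}(\nabla\times B,u)$, $\mathcal{Q}(\nabla\times B,u)$) by $T\,\norm{u}_{\tilde{L}^\infty_T(B^{3/p-1}_{p,1})}\norm{B}_{\tilde{L}^\infty_T(B^{3/p}_{p,1})}$ and claim the weight $(2+T)$ absorbs the factor $T$. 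Check the arithmetic in the weighted norm: inside the ball $\norm{u}\leq 2c_0$ and $\norm{B}\leq 2c_0/(2+T)$, so your bound contributes $\lesssim T c_0^2/(2+T)$ to $\norm{B}_{\mathscr{L}_T(B^{3/p}_{p,1})}$, hence $(2+T)\cdot Tc_0^2/(2+T)=Tc_0^2$ to the weighted norm $(2+T)\norm{B}$. Closing the scheme then forces $c_0\lesssim 1/T$, whereas the theorem asserts a $c_0$ independent of $T$. The weight trade only works for terms quadratic in $B$, where two factors of $1/(2+T)$ stand against one output weight $(2+T)$ and one $T$; for mixed terms a single factor $1/(2+T)$ is available and it is exactly cancelled by the output weight, leaving the bare $T$ un-absorbable. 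Your closing paragraph senses this danger but resolves it in the wrong direction.

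The missing idea is that the mixed terms require no factor of $T$ at all. Since $\mathscr{L}_T(B^{3/p}_{p,1})$ is by definition the intersection $\bigcap_{\rho}\tilde{L}^\rho_T(B^{3/p+2/\rho}_{p,1})$, it contains the smoothing endpoint $\tilde{L}^1_T(B^{3/p+2}_{p,1})$; you do not need the embedding into $\tilde{L}^1_T(B^{3/p}_{p,1})$ whose failure you invoke to justify the crude bound. The paper's Proposition \ref{prop.continuity2} runs Bony's decomposition pairing the $L^\infty_T$ factor at low frequency (using $B^{3/p}_{p,1}\hookrightarrow L^\infty$, whence the restriction to $r=1$) with the other factor in $L^1_T$ at two extra derivatives, e.g.
\begin{equation*}
\norm{\Delta_j T_u w}_{L^1_T L^p}\lesssim \norm{S_{j-1}u}_{L^\infty_T L^\infty}\,\norm{\Delta_j w}_{L^1_T L^p},
\end{equation*}
which yields the $T$-free estimate $\norm{\mathcal{P}(u,B)}_{\tilde{L}^1_T(B^{3/p+1}_{p,1})}\lesssim\norm{u}_{\mathscr{L}_T(B^{3/p-1}_{p,1})}\norm{B}_{\mathscr{L}_T(B^{3/p}_{p,1})}$, and similarly for $\mathcal{R}$ and $\mathcal{Q}(\nabla\times B,u)$; only $\mathcal{Q}(B,B)$ in the $u$-equation carries the factor $T$ (inequality \eqref{inequality 1 prop continuity 2}), and there the weighting closes as explained. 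A secondary slip: you group $J\cdot\nabla J$ with the ``algebra plus H\"older in time'' terms, but $\norm{J}_{\tilde{L}^\infty_T(B^{3/p}_{p,1})}$ is not controlled (that would require $B\in\tilde{L}^\infty_T(B^{3/p+1}_{p,1})$); this term must instead go through the critical, $T$-free estimate of Proposition \ref{prop.continuity} applied to $\nabla\times B\in\mathscr{L}_T(B^{3/p-1}_{p,1})$ --- harmless for the budget, but your derivation of it as written is invalid.
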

The question of the behavior, for large time, of the solution obtained in Theorem \ref{thm1} can be established along ``approximately" the same lines as those used, for instance, for the 3D Navier-Stokes equations in \cite{Planchon-assympto}. That is to say, it should be possible to prove that $ \norm {\mathcal{U}(t)}_{\dot{B}^{\frac{3}{p}-1}_{p,r}}$ tends to zero as $t$ tends to infinity.

It is also well known for the Navier Stokes equations (see the appendix of \cite{Planchon-assympto}), that for an initial data $u_0 \in \dot{B}^{\frac{3}{p}-1}_{p,r}(\mathbb{R}^3)$, $p\in (3,\infty)$, the $L^\infty $ norm of the velocity also decays to zero at infinity. More precisely it is controlled by $Ct^{-\frac{1}{2}}$.

 The proof of this result relies on the fact that the bi-linear operator in Duhamel's formula acts well on the Kato's space, together with the fact that we can iterate, as much as we want, the solution $u$ in Duhamel's formula in order to obtain a solution of the form of a sum of some $N$ multi-linear terms of $e^{t\Delta}u_0$, and a more regular remainder term $r_{N+1}$ which is unique in $L^\infty_t(L^3)$. 
 
 A priori, this approach should work as well in our case, but we will not enter into these details in this thesis.
 
In contrast to that, we will treat the case of initial data in the Herz-space $\widehat{B}^{\frac{3}{p}-1}_{p,r}(\mathbb{R}^3)$, and we will give some details in the case $r=\infty$ as an example. More precisely, we will prove
\begin{thm}\label{thm3}
Let $p\in (3,\infty)$ and $u_0,B_0 $ be two divergence free vector fields. There exists $c_0>0$ such that if
$$ \norm {e^{t\Delta}u_0}_{\widehat{K}_p^{1-\frac{3}{p}}}+\norm {e^{t\Delta}B_0}_{\widehat{K}_p^{1-\frac{3}{p}}}+\norm {e^{t\Delta}(\nabla \times B_0)}_{\widehat{K}^{1-\frac{3}{p} }_p}<c_0,$$
then \eqref{S zeta} has a unique global solution $\mathcal{U}=(u,B,\nabla\times B)$ in $\widehat{K}_p^{1-\frac{3}{p}} $ satisfying
$$\norm {\mathcal{U}(t,\cdot)}_{L^p}\lesssim t^{-\frac{1}{2}(1-\frac{3}{p})}.$$
\end{thm}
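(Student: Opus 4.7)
The approach is to set up a Picard fixed-point argument on the integral formulation $(S_\zeta)$ in the critical Kato-Herz space $\widehat{K}_p^{1-\frac{3}{p}}$, whose norm is designed to directly encode the time decay $t^{-\frac{1}{2}(1-\frac{3}{p})}$ of the $\widehat{L}^p$ norm of the solution. Since $(S_\zeta)$ has the same scaling as the classical 3D Navier-Stokes equations, this space is critical, and the overall strategy mirrors the one used in \cite{Planchon-assympto} for Navier-Stokes, with the additional work required to handle the two nonlocal multipliers $(Id-\Delta)^{-1}\nabla\times$ and $-\Delta(Id-\Delta)^{-1}$ that appear in the Hall-MHD bilinear form $\Omega$.

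The hypothesis of the theorem directly provides the smallness of the linear evolution $e^{t\Delta}\mathcal{U}_0$ in $\widehat{K}_p^{1-\frac{3}{p}}$. The heart of the proof is then the bilinear estimate
\begin{equation*}
\|\zeta(\mathcal{U},\mathcal{V})\|_{\widehat{K}_p^{1-\frac{3}{p}}} \lesssim \|\mathcal{U}\|_{\widehat{K}_p^{1-\frac{3}{p}}} \|\mathcal{V}\|_{\widehat{K}_p^{1-\frac{3}{p}}}.
\end{equation*}
Unfolding $\zeta=\mathcal{K}\Omega$ via \eqref{omega expression}--\eqref{zeta definition} and the explicit form of $\Gamma$, this reduces to bounding time integrals of the form $\int_0^t e^{(t-s)\Delta}\mathbb{P}\, M\bigl[\mathcal{Q},\mathcal{P},\mathcal{R}\bigr](\mathcal{U},\mathcal{V})(s)\,ds$, where $M$ stands for one of the Fourier multipliers $\mathrm{Id}$, $(Id-\Delta)^{-1}\nabla\times$ or $-\Delta(Id-\Delta)^{-1}$. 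For the divergence-form $\mathcal{Q}$-terms, a standard heat-kernel estimate in $\widehat{L}^p$ combined with H\"older in the product gives a time convolution of the type $\int_0^t (t-s)^{-\frac{1}{2}-\frac{3}{2p}} s^{-(1-\frac{3}{p})}\,ds \sim t^{-\frac{1}{2}(1-\frac{3}{p})}$, whose finiteness relies precisely on the assumption $p\in(3,\infty)$.

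The main obstacle is the frequency-dependent behaviour of the two nonlocal multipliers. At high frequencies $(Id-\Delta)^{-1}\nabla\times$ gains one derivative and pairs harmoniously with the classical $\mathcal{Q}$-scaling, but at low frequencies that gain disappears; conversely $-\Delta(Id-\Delta)^{-1}$ is a bounded multiplier at low frequencies but loses its smoothing at high ones. To handle this I would split each component of $\Omega$ into its low- and high-frequency parts through an inhomogeneous dyadic decomposition and estimate the two regimes separately, exploiting Bernstein's inequality in the low-frequency part to compensate for the loss of regularity; a similar splitting already underlies the proof of Theorem \ref{thm1}. The non-divergence terms $\mathcal{P}$ and $\mathcal{R}$ are handled by shifting one derivative onto the heat kernel and using the $\widehat{L}^p$-decay of $\nabla e^{t\Delta}$.

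Once the bilinear estimate is established, the fixed point is produced by the usual contraction argument on the closed ball of radius $2c_0$ in $\widehat{K}_p^{1-\frac{3}{p}}$, for $c_0$ small enough; uniqueness on the same ball follows from the bilinearity of $\zeta$. The $\widehat{L}^p$ decay $\|\mathcal{U}(t)\|_{\widehat{L}^p} \lesssim t^{-\frac{1}{2}(1-\frac{3}{p})}$ is then immediate from the definition of the Kato-Herz norm. The stronger $L^p$ bound claimed in the statement requires an additional argument: combining the $\widehat{L}^p$ decay with a Bernstein-type inequality applied on the dyadic Herz blocks of the solution (valid for $p\in(3,\infty)$) one recovers the full $L^p$ norm with the same time decay, which concludes the proof.
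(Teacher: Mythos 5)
Your proposal follows essentially the paper's route: a Picard fixed point in the critical Kato--Herz space via the bilinear estimate of Proposition \ref{prop.continuity kato}, proved exactly as you indicate --- H\"older/Young on the Fourier side, the Gaussian scaling $\norm{e^{-\tau|\cdot|^2}|\cdot|}_{L^p}\lesssim \tau^{-\frac{1}{2}-\frac{3}{2p}}$, and the Beta-type convolution $\int_0^t (t-s)^{-\frac{1}{2}-\frac{3}{2p}}\,s^{-(1-\frac{3}{p})}\,ds \sim t^{-\frac{1}{2}(1-\frac{3}{p})}$, finite precisely because $p\in(3,\infty)$ (your exponent $\frac{1}{2}+\frac{3}{2p}$ is the correct one; the paper's $\frac{1}{2}+\frac{3}{p}$ is a typo). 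However, your ``main obstacle'' --- the frequency-dependent behaviour of $(Id-\Delta)^{-1}\nabla\times$ and $-\Delta(Id-\Delta)^{-1}$ --- is not an obstacle in this framework, and the inhomogeneous dyadic splitting you propose is unnecessary: since the $\widehat{K}_p^{1-\frac{3}{p}}$ norm is computed on the modulus of the Fourier transform, the pointwise symbol bounds $\frac{|\xi|^{m}}{1+|\xi|^2}\le |\xi|$ for all $m\in[0,2]$ (valid at \emph{all} frequencies) let one dominate every component of $\zeta$ by the single model term $e^{-(t-s)|\xi|^2}|\xi|\,\big(|\widehat{L}|\star|\widehat{M}|\big)$, which is estimate \eqref{inequality continuation zeta in kato hat}; this is what the paper's inequality \eqref{holder inequality} is meant to express. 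Note in particular that for the derivative-free term $u\times B$ one must \emph{retain} a factor $|\xi|$ from the multiplier rather than bound the symbol by a constant, since otherwise the time convolution produces $t^{\frac{3}{2p}}$ instead of decay; your plan of ``shifting one derivative onto the heat kernel'' for $\mathcal{P}$ amounts to the same thing and is fine.

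The one genuinely shaky step is your final passage from the $\widehat{L}^p$ decay to the $L^p$ decay. A ``Bernstein-type inequality on the dyadic Herz blocks'' does not straightforwardly work: the blockwise bounds $\norm{\Delta_j\mathcal{U}}_{L^p}\lesssim \norm{\widehat{\Delta_j\mathcal{U}}}_{L^{p'}}$ cannot be summed over $j$, because the $\widehat{K}_p$ norm controls only the global $L^{p'}$ norm of $\widehat{\mathcal{U}}(t,\cdot)$, not an $\ell^1$ sum over blocks. No dyadic argument is needed: since $p\in(3,\infty)$ implies $p'\in[1,2]$, the Hausdorff--Young inequality (continuity of $\mathcal{F}$ from $L^{p'}$ into $L^{p}$) gives directly $\norm{\mathcal{U}(t,\cdot)}_{L^p}\lesssim \norm{\widehat{\mathcal{U}}(t,\cdot)}_{L^{p'}}\lesssim t^{-\frac{1}{2}(1-\frac{3}{p})}$, which is exactly how the paper concludes.
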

\begin{rmq}
By virtue of the characterization \eqref{caracterisation kato-besov}, we can show that the  solution obtained in Theorem \ref{thm3} is also in $\mathscr{L}(\widehat{B}_{p,\infty}^{\frac{3}{p}-1 })$, by following approximately the same steps in the proof of Theorems \ref{thm1} and \ref{thm3}. In fact for $\mathcal{U}_0 \in\widehat{B}^{\frac{3}{p}-1}_{p,r}(\mathbb{R}^3) $, small enough, we may construct a unique global solution in $\mathscr{L}(\widehat{B}^{\frac{3}{p}-1}_{p,r})$ by proceeding as in the proofs we will show in the next section. The details are left to the reader.
\end{rmq}
\noindent The rest of the paper is organized as follows: In section three we will prove, respectively, the wellposedeness in Theorem \ref{thm1} and Theorem \ref{thm2}. Then we provide some details about the proof of the wellposedeness in Kato-Herz space and the decay property described in Theorem \ref{thm3}. Finally, the Appendix is devoted to the some useful technical results.
\section{Proof of the main Theorems}
\subsection{Wellposedness in general ''critical" Besov spaces: Proof of Theorem \ref{thm1}}
The main key to prove Theorem \ref{thm1} is the following proposition
\begin{prop}\label{prop.continuity}
Let $(p,r)$ be in $[1, \infty)^2$, $u,v $ in $\mathscr{L}(\dot{B}^{\frac{3}{p}-1}_{p,r})$, $ \mathcal{Q}$, $ \mathcal{R}$, $ \mathcal{P}$ be given as in the introduction. We have
$$ \norm {\mathcal{Q}(u,v)}_{\tilde{L}^1(\dot{B}^{\frac{3}{p}-1}_{p,r})} \lesssim \norm u_{\mathscr{L}(\dot{B}^{\frac{3}{p}-1}_{p,r})} \norm v_{\mathscr{L}(\dot{B}^{\frac{3}{p}-1}_{p,r})},$$
$$ \norm {\mathcal{R}(u,v)}_{\tilde{L}^1(\dot{B}^{\frac{3}{p}-1}_{p,r})} \lesssim \norm u_{\mathscr{L}(\dot{B}^{\frac{3}{p}-1}_{p,r})} \norm v_{\mathscr{L}(\dot{B}^{\frac{3}{p}-1}_{p,r})},$$
$$ \norm {\mathcal{P}(u,v)}_{\tilde{L}^1(\dot{B}^{\frac{3}{p}}_{p,r})} \lesssim \norm u_{\mathscr{L}(\dot{B}^{\frac{3}{p}-1}_{p,r})} \norm v_{\mathscr{L}(\dot{B}^{\frac{3}{p}-1}_{p,r})}.$$
\end{prop}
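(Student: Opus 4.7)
The plan is to base the proof on Bony's paraproduct decomposition combined with the product estimates in the Chemin--Lerner scale $\tilde L^\rho(B^s_{p,r})$. All three bilinear operators are essentially pointwise products --- $\mathcal Q(u,v)=\nabla\cdot(u\otimes v)$ and $\mathcal R(u,v)=\nabla\times(u\times v)$ preceded by a first-order differential operator, $\mathcal P(u,v)=u\times v$ being the bare product. Since divergence and curl send $B^{s}_{p,r}$ continuously into $B^{s-1}_{p,r}$, uniformly in the time variable, the three claims reduce to a single scalar product estimate
\[
\|uv\|_{\tilde L^1(B^{\frac{3}{p}}_{p,r})} \;\lesssim\; \|u\|_{\mathscr L(B^{\frac{3}{p}-1}_{p,r})}\,\|v\|_{\mathscr L(B^{\frac{3}{p}-1}_{p,r})}.
\]
For $\mathcal Q$ and $\mathcal R$ the target $B^{\frac{3}{p}-1}_{p,r}$ is reached after spending one derivative on the product, while for $\mathcal P$ the target $B^{\frac{3}{p}}_{p,r}$ coincides with the product's regularity, so all three bounds follow from this single one.

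To establish it, I would exploit the flexibility of $\mathscr L(B^{\frac{3}{p}-1}_{p,r}) = \bigcap_{\rho\in[1,\infty]}\tilde L^\rho(B^{\frac{3}{p}-1+\frac{2}{\rho}}_{p,r})$, which lets me place $u$ and $v$ simultaneously in $\tilde L^{\rho_1}(B^{s_1}_{p,r})$ and $\tilde L^{\rho_2}(B^{s_2}_{p,r})$ for any Hölder-conjugate pair $(\rho_1,\rho_2)$, with $s_i = \frac{3}{p}-1+\frac{2}{\rho_i}$ and $s_1+s_2=\frac{6}{p}$. Splitting
\[
uv \;=\; T_u v + T_v u + R(u,v)
\]
via Bony, each of the three pieces is bounded in $B^{\frac{3}{p}}_{p,r}$ by $\|u\|_{B^{s_1}_{p,r}}\|v\|_{B^{s_2}_{p,r}}$ using Bernstein's inequality on the low-frequency factor of the paraproducts and the standard summation argument for the remainder, which converges because $s_1+s_2 = \frac{6}{p}>0$. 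Hölder in time with exponents $(\rho_1,\rho_2)$ then produces the $\tilde L^1$ integrability, and the $\ell^r$ structure of the Chemin--Lerner norm is preserved throughout the dyadic summation.

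The main obstacle is the borderline symmetric choice $\rho_1=\rho_2=2$, which would put both factors in $\tilde L^2(B^{\frac{3}{p}}_{p,r})$ --- a space that fails to be an algebra when $r>1$. I would therefore pick $\rho_1>2>\rho_2$, so that $s_1<\frac{3}{p}<s_2$, pushing one factor strictly below the critical Sobolev regularity and the other strictly above it, which is precisely the regime where Bony's paradifferential product estimate holds without loss for every $r\in[1,\infty]$. A secondary check is that the loss of exactly one derivative through $\nabla\cdot$ and $\nabla\times$ is genuinely of one unit in the Chemin--Lerner scale; this is immediate from the action of homogeneous Fourier multipliers on frequency-localized dyadic blocks and the very definition of $\tilde L^\rho(B^s_{p,r})$.
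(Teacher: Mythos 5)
Your proposal is correct and follows essentially the same route as the paper: reduce $\mathcal{Q}$ and $\mathcal{R}$ to the product estimate for $\mathcal{P}$ via first-order Fourier multipliers, then bound $uv$ in $\tilde L^1(B^{\frac{3}{p}}_{p,r})$ through Bony's decomposition with an asymmetric H\"older split in time. The paper instantiates your choice $\rho_1>2>\rho_2$ with $(\rho_1,\rho_2)=(4,\frac43)$, placing $u\in\tilde L^4(B^{\frac{3}{p}-\frac12}_{p,r})\hookrightarrow\tilde L^4(B^{-\frac12}_{\infty,r})$ and $v\in\tilde L^{\frac43}(B^{\frac{3}{p}+\frac12}_{p,r})$, with the remainder summation converging exactly because $\frac{6}{p}>0$, as you note.
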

\begin{proof}
 We will focus on the last inequality. The first two ones will follow by noticing that $\mathcal{Q}$ and $\mathcal{R}$ can be written as $\mathcal{D}^1(\mathcal{P})$, where $\mathcal{D}^1$ is a Fourier-multiplier of order 1.

 We consider the Bony's decomposition, to write
$$ uv = T_uv + T_vu+ R(u,v).$$
For the first term, we have
\begin{align*}
\norm{\Delta_j T_uv }_{L^1L^{p}_x}&\lesssim \sum_{k \sim j} \norm {S_{k-1} u}_{L^4 L^\infty_x} \norm {\Delta_k v}_{L^{\frac{4}{3}}L^p_x}.
\end{align*}
Using proposition \ref{prop.properties Besov}, we infer that
\begin{align*}
\norm{\Delta_j T_uv }_{L^1L^{p}_x}&\lesssim \sum_{k \sim j}c_{k,r}^2 2^{-k\frac{3}{p}} \norm { u}_{\tilde{L}^4 (\dot{B}^{-\frac{1}{2}}_{\infty,r})} \norm {v}_{\tilde{L}^\frac{4}{3}(\dot{B}^{\frac{3}{p}+\frac{1}{2}}_{p,r})}\\
&\lesssim  c_{j,r} 2^{-j\frac{3}{p}} \norm { u}_{\tilde{L}^4 (\dot{B}^{\frac{3}{p}-\frac{1}{2}}_{p,r})} \norm {v}_{\tilde{L}^\frac{4}{3}(\dot{B}^{\frac{3}{p}+\frac{1}{2}}_{p,r})}.
\end{align*} 
$T_vu$ enjoys the same estimate: by commuting $u$ and $v$ in the previous one, we obtain then
\begin{align*}
\norm{\Delta_j T_vu }_{L^1L^{p}_x}&\lesssim c_{j,r} 2^{-j\frac{3}{p}} \norm { v}_{\tilde{L}^4 (\dot{B}^{\frac{3}{p}-\frac{1}{2}}_{p,r})} \norm {u}_{\tilde{L}^\frac{4}{3}(\dot{B}^{\frac{3}{p}+\frac{1}{2}}_{p,r})}.
\end{align*}
Finally, the remainder term, can be dealt with along the same lines, we infer that
\begin{align*}
\norm {\Delta_j R(u,v)}_{L^1(L^p_x)} &\lesssim   \sum _{k\geq j+N_0} \norm {\tilde{\Delta}_k u}_{L^4(L^\infty_x)}\norm {\Delta_k v}_{L^\frac{4}{3}(L^{p }_x)}\\
&\lesssim 2^{-j \frac{3}{p}} \sum _{k\geq j+N_0} 2^{(k-j) \frac{3}{p}} c_{k,r}^2 \norm { u}_{\tilde{L}^4 (\dot{B}^{-\frac{1}{2}}_{\infty,r})} \norm {v}_{\tilde{L}^\frac{4}{3}(\dot{B}^{\frac{3}{p}+\frac{1}{2}}_{p,r})}\\
&\lesssim  c_{j,r} 2^{-j\frac{3}{p}} \norm { u}_{\tilde{L}^4 (\dot{B}^{\frac{3}{p}-\frac{1}{2}}_{p,r})} \norm {v}_{\tilde{L}^\frac{4}{3}(\dot{B}^{\frac{3}{p}+\frac{1}{2}}_{p,r})}.
\end{align*}
Thanks to the embedding $\mathscr{L}(\dot{B}^{\frac{3}{p}}_{p,r}) \hookrightarrow \tilde{L}^\frac{4}{3}(\dot{B}^{\frac{3}{p}+\frac{1}{2}}_{p,r}) \cap \tilde{L}^4 (\dot{B}^{\frac{3}{p}-\frac{1}{2}}_{p,r}),$ 
then we obtain
\begin{equation}\label{product uv}
\norm {uv}_{\tilde{L}^1(\dot{B}^{\frac{3}{p}}_{p,r})} \lesssim \norm u_{\mathscr{L}(\dot{B}^{\frac{3}{p}-1}_{p,r})} \norm v_{\mathscr{L}(B^{\frac{3}{p}-1}_{p,r})}
\end{equation}
It follows that
\begin{equation}
\norm {\mathcal{P}(u,v)}_{\tilde{L}^1(\dot{B}^{\frac{3}{p}}_{p,r})}\approx\norm {uv}_{\tilde{L}^1(\dot{B}^{\frac{3}{p}}_{p,r})} \lesssim \norm u_{\mathscr{L}(\dot{B}^{\frac{3}{p}-1}_{p,r})} \norm v_{\mathscr{L}(\dot{B}^{\frac{3}{p}-1}_{p,r})},
\end{equation}
and 
\begin{equation}
\norm {\mathcal{Q}(u,v)}_{\tilde{L}^1(\dot{B}^{\frac{3}{p}-1}_{p,r})} \approx \norm {\mathcal{R}(u,v)}_{\tilde{L}^1(\dot{B}^{\frac{3}{p}-1}_{p,r})} \lesssim \norm u_{\mathscr{L}(\dot{B}^{\frac{3}{p}-1}_{p,r})} \norm v_{\mathscr{L}(\dot{B}^{\frac{3}{p}-1}_{p,r})}.
\end{equation}
Proposition \ref{prop.continuity} is proved.
\end{proof}
\begin{rmq}
We can replace $\tilde{L}^1(\cdot)$ and $  \mathscr{L} (\cdot)$ in the previous proposition, respectively, by $\tilde{L}^1_T(\cdot) $ and $  \mathscr{L }_T(\cdot)$, for $T>0.$
\end{rmq}
 In order to prove Theorem \ref{thm1}, we will use the following the abstract Banach fixed point theorem stated in the following lemma. The reader can see Lemma 4 in \cite{handbook} for more details	
 \begin{lemma}\label{lemma fixed point}
Let $\mathcal{X}$ be an abstract Banach space with norm $\norm
 .$, let $\zeta$ be a bi-linear operator mapping $\mathcal{X} \times \mathcal{X} $ into $\mathcal{X}$ satisfying
 $$\forall x_1,x_2 \in \mathcal{X} ,\;\; \norm {\zeta(x_1,x_2)}\leq \eta \norm {x_1} \norm {x_2}, \;\; \text{ for some } \eta >0,$$
 then for all $y\in \mathcal{X}$ such that 
 $$ \norm y < \frac{1}{4\eta}$$
 the equation $$x=y+\zeta(x,x)$$
 has a solution $x\in \mathcal{B}_{\mathcal{X}}\big (0,2\norm y\big)$.\\
 This solution is the unique one in the ball $\mathcal{B}_{\mathcal{X}}\big (0,\frac{1}{2 \eta}\big) $
\end{lemma}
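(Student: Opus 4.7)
The plan is to establish the lemma by the standard Picard iteration: set $x_0 := y$ and inductively $x_{n+1} := y + \zeta(x_n, x_n)$, and show that $(x_n)$ converges in $\mathcal{X}$ to a fixed point of the map $F(x) := y + \zeta(x,x)$ inside the closed ball $\mathcal{B}_\mathcal{X}(0, 2\norm{y})$. Completeness of $\mathcal{X}$ and the bilinear estimate for $\zeta$ are the only ingredients I will use; everything reduces to a geometric contraction in one real variable.

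The first step is \emph{stability}: I will prove by induction that $\norm{x_n}\leq 2\norm{y}$ for every $n$. Indeed, assuming the bound at rank $n$, the bilinear hypothesis gives
\[
\norm{x_{n+1}} \leq \norm{y} + \eta\norm{x_n}^2 \leq \norm{y}\bigl(1 + 4\eta\norm{y}\bigr),
\]
and the assumption $\norm{y} < \frac{1}{4\eta}$ ensures $1 + 4\eta\norm{y} < 2$, so the bound propagates. The second step is \emph{contractivity of consecutive differences}: the bilinear identity
\[
\zeta(x_n,x_n) - \zeta(x_{n-1},x_{n-1}) = \zeta(x_n - x_{n-1}, x_n) + \zeta(x_{n-1}, x_n - x_{n-1})
\]
combined with the stability bound yields $\norm{x_{n+1} - x_n} \leq 4\eta\norm{y}\,\norm{x_n - x_{n-1}}$. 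Since $\kappa := 4\eta\norm{y} < 1$, $(x_n)$ is Cauchy in $\mathcal{X}$, hence converges to some $x$ with $\norm{x}\leq 2\norm{y}$; continuity of $\zeta$ on bounded sets (another consequence of the bilinear estimate) lets me pass to the limit and obtain $x = y + \zeta(x,x)$.

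For \emph{uniqueness} in the ball $\mathcal{B}_\mathcal{X}(0, \tfrac{1}{2\eta})$, I will argue directly: if $x$ and $x'$ are two solutions, then $x - x' = \zeta(x - x', x) + \zeta(x', x - x')$, so
\[
\norm{x - x'} \leq \eta\bigl(\norm{x} + \norm{x'}\bigr)\norm{x - x'}.
\]
The constraint $\norm{x}, \norm{x'} < \frac{1}{2\eta}$ makes the coefficient strictly smaller than $1$, forcing $x = x'$. Since the solution produced in the previous step already lies in $\mathcal{B}_\mathcal{X}(0, 2\norm{y}) \subset \mathcal{B}_\mathcal{X}(0, \tfrac{1}{2\eta})$ (again because $\norm{y} < \tfrac{1}{4\eta}$), the uniqueness in the larger ball applies to it.

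There is no substantial obstacle here: the argument is a routine Banach contraction and is standard (the author himself refers to \cite{handbook}). The only point requiring care is the bookkeeping of constants — one must verify that the single hypothesis $\norm{y} < \frac{1}{4\eta}$ simultaneously keeps the iterates inside $\mathcal{B}(0, 2\norm{y})$, produces a strict contraction on that ball, and gives the inclusion into the uniqueness ball $\mathcal{B}(0, \tfrac{1}{2\eta})$. This is exactly why the factor $4$ (rather than $2$) appears in the smallness threshold.
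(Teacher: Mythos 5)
Your argument is correct: the stability bound $\norm{x_{n+1}}\leq \norm{y}(1+4\eta\norm{y})<2\norm{y}$, the telescoping identity $\zeta(x_n,x_n)-\zeta(x_{n-1},x_{n-1})=\zeta(x_n-x_{n-1},x_n)+\zeta(x_{n-1},x_n-x_{n-1})$ giving the contraction factor $4\eta\norm{y}<1$, the passage to the limit by boundedness (hence continuity) of $\zeta$, the uniqueness estimate with coefficient $\eta(\norm{x}+\norm{x'})<1$ in the open ball $\mathcal{B}_{\mathcal{X}}\big(0,\tfrac{1}{2\eta}\big)$, and the inclusion $2\norm{y}<\tfrac{1}{2\eta}$ are all sound. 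The paper gives no proof of this lemma at all, referring instead to Lemma 4 of \cite{handbook}, and your Picard iteration is exactly the standard argument behind that citation, so your write-up simply supplies the details the paper omits.
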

\begin{proof}[\textbf{\textit{Proof of Theorem \ref{thm1}}}]
In order to apply Lemma \ref{lemma fixed point}, all we need to show is that 
$$ \norm { \zeta(\mathcal{U}, \mathcal{V})}_{\mathscr{L}(\dot{B}^{\frac{3}{p}-1}_{p,r} )} \lesssim \norm {\mathcal{U}}_{\mathscr{L}(\dot{B}^{\frac{3}{p}-1}_{p,r})} \norm{\mathcal{V}}_{\mathscr{L}(\dot{B}^{\frac{3}{p}-1}_{p,r} )}$$
$$ \norm { e^{t\Delta}\mathcal{U}_0 }_{\mathscr{L}(\dot{B}^{\frac{3}{p}-1}_{p,r} )} \lesssim \norm {\mathcal{U}_0}_{\dot{B}^{\frac{3}{p}-1}_{p,r}}.$$
The second inequality follows directly from inequality \eqref{heat kernel 1} from Lemma \ref{lemma heat kernel} in Appendix, and the first one follows by combining Proposition \ref{prop.continuity}, Proposition \ref{prop.laplacian} and inequality \eqref{heat kernel 2} from Lemma \ref{lemma heat kernel}. Indeed,
 $\zeta = (\zeta_1,\zeta_2,\zeta_3)^T$ contains in each component the bi-linear operators $\mathcal{Q}, \mathcal{P}, \mathcal{R}$:
\begin{itemize}
\item  For $\mathcal{Q}$ and $\mathcal{R}$: 
\begin{itemize}
\item in $\zeta_1$ we apply directly Proposition \ref{prop.continuity},
\item in $\zeta_2$ we apply Proposition \ref{prop.continuity} and inequality \eqref{inequality 1.prop laplace} for $\rho=1$ and $k= 1$
\item in $\zeta_2$ we apply Proposition \ref{prop.continuity} and inequality \eqref{inequality 2.prop laplace} for $\rho=1$ and $k= 2$
\end{itemize}
\item For $\mathcal{P}$
\begin{itemize}
\item in $\zeta_2$ we apply Proposition \ref{prop.continuity} and inequality \eqref{inequality 1.prop laplace} for $\rho=1$ and $k= 0$
\item in $\zeta_2$ we apply Proposition \ref{prop.continuity} and inequality \eqref{inequality 2.prop laplace} for $\rho=1$ and $k= 1$
\end{itemize}
\end{itemize} 
This ends the proof of Theorem \ref{thm1}.
\end{proof}
As a corollary, one may replace the smallness condition on the initial data in Theorem \ref{thm1}, by another one on the maximal time of existence, namely we can prove:
\begin{cor}\label{corollary local existence}
Let $p\in [1, \infty), \; r\in [1,\infty]$, and $\mathcal{U}_0=(u_0,B_0, \nabla \times B_0)$, be in  $\dot{B}^{\frac{3}{p}-1}_{p,r}(\mathbb{R}^3) $.  \\
There exists $T^*>0$ and a unique solution $\mathcal{U}$ to \eqref{S zeta} in $ \mathscr{L}_T (\dot{B}^{\frac{3}{p}-1}_{p,r} )$, for all $T<T^*$.
\end{cor}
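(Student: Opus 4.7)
The plan is to mimic the Banach fixed point argument used for Theorem \ref{thm1}, but on the truncated space $\mathscr{L}_T(B^{\frac{3}{p}-1}_{p,r})$ instead of $\mathscr{L}(B^{\frac{3}{p}-1}_{p,r})$, and to replace the smallness of $\mathcal{U}_0$ by smallness of the linear heat flow on a sufficiently short interval $[0,T_0]$. Concretely, I would first observe that, thanks to the remark following Proposition \ref{prop.continuity}, the bilinear estimate
$$ \norm {\zeta(\mathcal{U},\mathcal{V})}_{\mathscr{L}_T(B^{\frac{3}{p}-1}_{p,r})} \lesssim \norm{\mathcal{U}}_{\mathscr{L}_T(B^{\frac{3}{p}-1}_{p,r})} \norm{\mathcal{V}}_{\mathscr{L}_T(B^{\frac{3}{p}-1}_{p,r})}$$
holds with a constant $\eta>0$ independent of $T$ (combine Proposition \ref{prop.continuity} with Proposition \ref{prop.laplacian} and the heat-kernel estimates of Lemma \ref{lemma heat kernel}, exactly as in the proof of Theorem \ref{thm1}).

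The key new ingredient, which requires the hypothesis $r<\infty$, is the continuity at $T=0^+$ of the linear flow:
$$ \lim_{T \to 0^+}\norm{e^{t\Delta}\mathcal{U}_0}_{\mathscr{L}_T(B^{\frac{3}{p}-1}_{p,r})}=0. $$
This follows from dominated convergence applied to the series defining the Besov norm, once one uses the heat-kernel estimate \eqref{heat kernel 1} from Lemma \ref{lemma heat kernel} to dominate the summands by an $\ell^r$-summable sequence, and notes that each individual $\tilde L^\rho([0,T];\cdot)$ piece of $\mathscr{L}_T$ vanishes as $T\to 0^+$ (for $\rho<\infty$ by integrability and for $\rho=\infty$ by the fact that $\Delta_j e^{t\Delta}\mathcal{U}_0\to \Delta_j\mathcal{U}_0$ with a summable envelope). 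Given this, for any $\mathcal{U}_0 \in B^{\frac{3}{p}-1}_{p,r}$ one can pick $T_0>0$ small enough so that $\norm{e^{t\Delta}\mathcal{U}_0}_{\mathscr{L}_{T_0}(B^{\frac{3}{p}-1}_{p,r})} < \frac{1}{4\eta}$; Lemma \ref{lemma fixed point} then produces a unique solution on $[0,T_0]$ in a ball of $\mathscr{L}_{T_0}(B^{\frac{3}{p}-1}_{p,r})$.

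Finally, I would define $T^*$ as the supremum of times $T>0$ for which $(S_\zeta)$ admits a solution in $\mathscr{L}_T(B^{\frac{3}{p}-1}_{p,r})$, and verify by a standard gluing argument that this supremum is attained as a limit: for any $T_1<T^*$ one patches the solution on $[0,T_0]$ with the one obtained by restarting from $\mathcal{U}(T_0,\cdot) \in B^{\frac{3}{p}-1}_{p,r}$ and iterating. Uniqueness on $[0,T]$ for $T<T^*$ reduces to the uniqueness statement of Lemma \ref{lemma fixed point} applied locally on small subintervals, using the bilinear continuity on $\mathscr{L}_T$.

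The main obstacle I expect is the justification of $\norm{e^{t\Delta}\mathcal{U}_0}_{\mathscr{L}_T(\cdot)}\to 0$, precisely because the $\tilde L^\infty$ component of $\mathscr{L}_T$ does not shrink with $T$ in a naive sense; this is exactly the step where the assumption $r<\infty$ is essential and where one must carefully combine dominated convergence on the Littlewood-Paley side with the heat kernel decay to split the tails at high frequency and the convergence to $\Delta_j \mathcal{U}_0$ at each fixed $j$. Everything else is a routine repetition of the proof of Theorem \ref{thm1}.
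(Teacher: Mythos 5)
Your central claim fails: $\norm{e^{t\Delta}\mathcal{U}_0}_{\mathscr{L}_T(B^{\frac{3}{p}-1}_{p,r})}$ does \emph{not} tend to $0$ as $T\to 0^+$, even for $r<\infty$. Indeed $\mathscr{L}_T$ contains the endpoint $\rho=\infty$, and for every dyadic block one has, by strong continuity of the heat semigroup on $L^p$ ($p<\infty$),
\begin{equation*}
\sup_{0<t\leq T}\norm{\Delta_j e^{t\Delta}\mathcal{U}_0}_{L^p}\;\geq\;\lim_{t\to 0^+}\norm{\Delta_j e^{t\Delta}\mathcal{U}_0}_{L^p}\;=\;\norm{\Delta_j\mathcal{U}_0}_{L^p},
\end{equation*}
so that $\norm{e^{t\Delta}\mathcal{U}_0}_{\tilde{L}^\infty_T(B^{\frac{3}{p}-1}_{p,r})}\geq c\,\norm{\mathcal{U}_0}_{B^{\frac{3}{p}-1}_{p,r}}$ uniformly in $T>0$. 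The very convergence $\Delta_j e^{t\Delta}\mathcal{U}_0\to\Delta_j\mathcal{U}_0$ that you invoke works against you: it pins the $\rho=\infty$ component at the full Besov norm of the data instead of sending it to zero. Consequently, applying Lemma \ref{lemma fixed point} directly to $\mathcal{U}$ on $\mathscr{L}_T(B^{\frac{3}{p}-1}_{p,r})$ requires smallness of $\mathcal{U}_0$, i.e.\ you merely recover Theorem \ref{thm1} rather than a local result for large data. What the hypothesis $r<\infty$ actually buys is the vanishing, as $T\to 0^+$, of the components with \emph{finite} time exponent, $\tilde{L}^\rho_T(B^{\frac{3}{p}-1+\frac{2}{\rho}}_{p,r})$ with $\rho<\infty$, and that alone is not smallness of the full $\mathscr{L}_T$ norm.

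The paper's proof circumvents precisely this obstruction by solving for the perturbation rather than for $\mathcal{U}$ itself: it sets $\mathcal{U}=\mathcal{V}+\mathcal{W}$ with $\mathcal{V}\bydef e^{t\Delta}\mathcal{U}_0$ and runs a fixed point for $\mathcal{W}=\zeta(\mathcal{V},\mathcal{V})+\zeta(\mathcal{V},\mathcal{W})+\zeta(\mathcal{W},\mathcal{W})$, using the linear-plus-bilinear Lemma \ref{lemma fixed point3} in place of Lemma \ref{lemma fixed point}. The key observation is that in the proof of Proposition \ref{prop.continuity} the product estimates use only the norm of one factor in $\mathcal{Z}_T\bydef\tilde{L}^4_T(B^{\frac{3}{p}-\frac{1}{2}}_{p,r})\cap\tilde{L}^{\frac{4}{3}}_T(B^{\frac{3}{p}+\frac{1}{2}}_{p,r})$, which involves only finite time exponents; hence, for $r<\infty$, $\norm{\mathcal{V}}_{\mathcal{Z}_T}\to 0$ as $T\to 0^+$ --- this is exactly the place where your dominated convergence argument is valid and needed. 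One then chooses $T_1$ so that the linear map $\mathcal{W}\mapsto\zeta(\mathcal{V},\mathcal{W})$ has norm $\lambda<1$ on $\mathscr{L}_T(B^{\frac{3}{p}-1}_{p,r})$ for $T<T_1$, and $T^*\leq T_1$ so that the source satisfies $\norm{\Omega(\mathcal{V},\mathcal{V})}_{\tilde{L}^1_T(B^{\frac{3}{p}-1}_{p,r})}<\varepsilon_1\leq\frac{(1-\lambda)^2}{4\gamma}$; the large $\tilde{L}^\infty$ part of the solution is then carried entirely by the explicit free term $\mathcal{V}$ and never needs to be small. With this repair the remainder of your sketch (uniqueness via local application of the fixed point, definition of $T^*$) goes through, although the paper obtains existence and uniqueness on each $[0,T]$, $T<T^*$, in one stroke and does not need the gluing argument you describe.
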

\begin{proof} 
We split the solution $\mathcal{U}$ into a sum $$\mathcal{U} = \mathcal{V}+ \mathcal{W},$$ where $\mathcal{V}$ is given by
$$ \mathcal{V}(t,\cdot) \bydef e^{t\Delta} \mathcal{U}_{0}.$$
then it remains to solve, by fixed point argument, the equation on $\mathcal{W}$
$$ \mathcal{W} =  2\zeta(\mathcal{V},\mathcal{V})+2 \zeta(\mathcal{W},\mathcal{W}) +  \zeta(\mathcal{V},\mathcal{W})+  \zeta(\mathcal{W},\mathcal{V}).$$
Let us point out that, according to the previous calculations, we have
$$\norm { \Omega(\mathcal{W}, \mathcal{W})}_{\tilde{L}^1_T(\dot{B}^{\frac{3}{p}-1}_{p,r})} \leq \gamma \norm {\mathcal{W}}_{\mathscr{L}_T(\dot{B}^{\frac{3}{p}-1}_{p,r})}^2 $$
for some $\gamma>0$, where $\Omega$ is given by \eqref{omega expression}. We also have
$$ \norm {\zeta(\mathcal{V},\mathcal{V})}_{\mathscr{L}_T(\dot{B}^{\frac{3}{p}-1}_{p,r})} =\norm { \mathcal{K} \Omega(\mathcal{V}, \mathcal{V}) }_{\mathscr{L}_T(\dot{B}^{\frac{3}{p}-1}_{p,r})}\lesssim \norm { \Omega(\mathcal{V}, \mathcal{V})}_{\tilde{L}^1_T(\dot{B}^{\frac{3}{p}-1}_{p,r})}.$$
On the other hand we know that $\Omega(\mathcal{V}, \mathcal{V}) \in \tilde{L}^1_T(\dot{B}^{\frac{3}{p}-1}_{p,r})$ from the estimates of Proposition \ref{prop.continuity}, and Lemma \ref{lemma heat kernel}, which gives in particular
$$\norm { \Omega(\mathcal{V}, \mathcal{V})}_{\tilde{L}^1_T(\dot{B}^{\frac{3}{p}-1}_{p,r})} \lesssim \norm {\mathcal{V}}_{\mathscr{L}_T(\dot{B}^{\frac{3}{p}-1}_{p,r})}^2 \lesssim \norm {\mathcal{V}_0}_{\dot{B}^{\frac{3}{p}-1}_{p,r} }^2<\infty.$$
For the linear term on $\mathcal{W}$, by virtue of Lemma \ref{lemma heat kernel} and the proof of Proposition \ref{prop.continuity}, where we showed that we can obtain the estimates of $\mathcal{Q},$ $ \mathcal{P}$ and $\mathcal{R}$, by using only the norm of $V$ in $   \mathcal{Z}_T\bydef\tilde{L}^4_T(\dot{B}^{\frac{3}{p}-\frac{1}{2}})\cap \tilde{L}^\frac{4}{3}_T(\dot{B}^{\frac{3}{p}+\frac{1}{2}}) $, we infer that
$$\norm {\zeta(\mathcal{V},\mathcal{W})}_{\mathscr{L}_T(\dot{B}^{\frac{3}{p}-1}_{p,r})}+ \norm {\zeta(\mathcal{W},\mathcal{V})}_{\mathscr{L}_T(\dot{B}^{\frac{3}{p}-1}_{p,r})} \leq C \norm {\mathcal{V}}_{\mathcal{Z}_T} \norm { \mathcal{W}}_{\mathscr{L}_T(\dot{B}^{\frac{3}{p}-1}_{p,r})},$$
for some universal constant $C>0$.
We chose $T_1$ such that, for all $T<T_1$
$$C\norm {\mathcal{V}}_{\mathcal{Z}_T} <  \lambda<1,$$
we can then chose $T^*\leq T_1$ small enough so that, for all $T<T^*\leq T_1$, we have
$$\norm { \Omega(\mathcal{V}, \mathcal{V})}_{\tilde{L}^1_T(\dot{B}^{\frac{3}{p}-1}_{p,r})} < \varepsilon_1 \leq \frac{(1-\lambda)^2}{4\gamma}$$
The result follows by a direct application of Lemma \ref{lemma fixed point3}.
\end{proof}
\subsection{Reduction of the required initial regularity on the magnetic field: Proof of Theorem \ref{thm2}}
The key estimates to prove Theorem \ref{thm2} is shown in the following proposition
\begin{prop}\label{prop.continuity2}
Let $p$ be in $[1, \infty)$, $T>0$, $u$ in $\mathscr{L}_T(\dot{B}^{\frac{3}{p}-1}_{p,1})$ and $w,z$ be in $\mathscr{L}_T(\dot{B}^{\frac{3}{p}}_{p,1})$, then we have
\begin{equation}\label{inequality 1 prop continuity 2}
 \norm {\mathcal{Q}(w,z)}_{\tilde{L}^1_T(\dot{B}^{\frac{3}{p}-1}_{p,1})} \lesssim T\norm w_{\mathscr{L}_T(\dot{B}^{\frac{3}{p} }_{p,1})} \norm z_{\mathscr{L}_T(B^{\frac{3}{p} }_{p,1})}
\end{equation} 
\begin{equation}\label{inequality 0 prop continuity 2}
\norm {\mathcal{Q}(w,z)}_{\tilde{L}^1_T(\dot{B}^{\frac{3}{p}+1}_{p,1})} \lesssim  \norm w_{\mathscr{L}_T(\dot{B}^{\frac{3}{p} }_{p,1})} \norm z_{\mathscr{L}_T(B^{\frac{3}{p} }_{p,1})}
\end{equation}
 \begin{equation}\label{inequality 2 prop continuity 2}
 \norm {\mathcal{P}(u,w)}_{\tilde{L}^1_T(\dot{B}^{\frac{3}{p}+1}_{p,1})} \lesssim \norm u_{\mathscr{L}_T(\dot{B}^{\frac{3}{p}-1}_{p,1})} \norm w_{\mathscr{L}_T(B^{\frac{3}{p}}_{p,1})} 
 \end{equation}
 \begin{equation}\label{inequality 3 prop continuity 2}
 \norm {\mathcal{R}(u,w)}_{\tilde{L}^1_T(\dot{B}^{\frac{3}{p} }_{p,1})} \lesssim \norm u_{\mathscr{L}_T(\dot{B}^{\frac{3}{p}-1}_{p,1})} \norm w_{\mathscr{L}_T(B^{\frac{3}{p}}_{p,1})} 
 \end{equation}
\end{prop}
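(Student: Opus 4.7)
The four estimates split into two natural families. Since $\mathcal{Q}(w,z)\approx \nabla\cdot(w\otimes z)$ and $\mathcal{R}(u,w)\approx\nabla\times(uw)$ act as first-order Fourier multipliers applied to a pointwise product, while $\mathcal{P}(u,w)\approx uw$, each inequality reduces to a product estimate on $wz$ or $uw$. Concretely, \eqref{inequality 1 prop continuity 2} is equivalent to
$$\norm{wz}_{\tilde{L}^1_T(B^{\frac{3}{p}}_{p,1})}\lesssim T\,\norm{w}_{\mathscr{L}_T(B^{\frac{3}{p}}_{p,1})}\norm{z}_{\mathscr{L}_T(B^{\frac{3}{p}}_{p,1})},$$
\eqref{inequality 0 prop continuity 2} to the same inequality with $B^{\frac{3}{p}+2}_{p,1}$ on the left and no factor $T$, and both \eqref{inequality 2 prop continuity 2} and \eqref{inequality 3 prop continuity 2} to
$$\norm{uw}_{\tilde{L}^1_T(B^{\frac{3}{p}+1}_{p,1})}\lesssim\norm{u}_{\mathscr{L}_T(B^{\frac{3}{p}-1}_{p,1})}\norm{w}_{\mathscr{L}_T(B^{\frac{3}{p}}_{p,1})}.$$

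For \eqref{inequality 1 prop continuity 2}, the plan is to exploit the Banach algebra property of $B^{\frac{3}{p}}_{p,1}$ pointwise in time and then to combine with the elementary H\"older embedding $L^{\infty}_T\cdot L^{\infty}_T\hookrightarrow L^{1}_T$, which produces the $T$ factor. One concludes by the Minkowski embedding $L^1_T(B^{\frac{3}{p}}_{p,1})\hookrightarrow\tilde{L}^1_T(B^{\frac{3}{p}}_{p,1})$, which is in fact an equality of norms when $r=1$.

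For the three remaining inequalities I would follow the scheme of Proposition \ref{prop.continuity}: apply Bony's decomposition $fg=T_f g+T_g f+R(f,g)$ and control each piece by H\"older in time together with Bernstein's inequality in space. For \eqref{inequality 0 prop continuity 2}, the splits $\tilde{L}^{\infty}_T(B^{\frac{3}{p}}_{p,1})\cdot\tilde{L}^{1}_T(B^{\frac{3}{p}+2}_{p,1})$ and its symmetric variant plugged into the paraproduct and remainder calculations yield a frequency-localized bound of order $c_{j,1}\,2^{-j(\frac{3}{p}+2)}$, which is exactly the $B^{\frac{3}{p}+2}_{p,1}$ level needed. For \eqref{inequality 2 prop continuity 2} and \eqref{inequality 3 prop continuity 2}, I would treat the paraproduct $T_u w$ and the remainder $R(u,w)$ with the pair $\tilde{L}^{\infty}_T(B^{\frac{3}{p}-1}_{p,1})\cdot\tilde{L}^{1}_T(B^{\frac{3}{p}+2}_{p,1})$, and the paraproduct $T_w u$ with the pair $L^{\infty}_T(L^\infty_x)\cdot\tilde{L}^{1}_T(B^{\frac{3}{p}+1}_{p,1})$, where the $L^\infty$-in-space control of $w$ is provided by $B^{\frac{3}{p}}_{p,1}\hookrightarrow L^\infty$. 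Summing over dyadic shells $k\sim j$ for the paraproducts and $k\ge j-N_0$ for the remainder then yields the target rate $c_{j,1}\,2^{-j(\frac{3}{p}+1)}$.

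The step I expect to demand the most care is the low-frequency bookkeeping inside $T_u w$ and $R(u,w)$ for the last two inequalities. Since $u$ only lies in $B^{\frac{3}{p}-1}_{p,1}\hookrightarrow B^{-1}_{\infty,1}$, the Bernstein bound
$$\norm{S_{k-1}u}_{L^\infty_x}\lesssim\sum_{m\le k-1}2^{\frac{3m}{p}}\norm{\Delta_m u}_{L^p_x}\lesssim 2^k\beta_k\,\norm{u}_{B^{\frac{3}{p}-1}_{p,1}}$$
carries an unavoidable growth $2^k$, tempered only by a sequence $\beta_k$ which one checks to be $\ell^1$ by Young's inequality against a geometric weight. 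This $2^k$ must be absorbed cleanly by the $2^{-k(\frac{3}{p}+2)}$ gain coming from $\Delta_k w\in\tilde{L}^1_T(B^{\frac{3}{p}+2}_{p,1})$ to recover $2^{-j(\frac{3}{p}+1)}$ at the end, and keeping a single summable sequence $c_{j,1}$ alive throughout is the only genuine technicality beyond the pattern of Proposition \ref{prop.continuity}.
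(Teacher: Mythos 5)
Your proposal is correct and follows essentially the same route as the paper: the algebra property of $B^{\frac{3}{p}}_{p,1}$ combined with the trivial time embedding (costing the factor $T$) for \eqref{inequality 1 prop continuity 2}, and Bony decompositions with exactly the splits $\tilde{L}^\infty_T(B^{\frac{3}{p}}_{p,1})\cdot\tilde{L}^1_T(B^{\frac{3}{p}+2}_{p,1})$, resp. $\tilde{L}^\infty_T\cdot\tilde{L}^1_T$ at the levels you indicate, for the remaining three inequalities, with the reduction via $\mathcal{D}^1(\mathcal{P})\approx\mathcal{R}$ also as in the paper. The only cosmetic difference lies in your handling of the low frequencies of $u$: where you run the Bernstein summation for $\norm{S_{k-1}u}_{L^\infty_x}$ by hand and track the auxiliary $\ell^1$ sequence $\beta_k$, the paper simply uses $\norm{S_{j-1}u}_{L^\infty_x}\lesssim 2^{j}\norm{u}_{B^{-1}_{\infty,\infty}}$ (the $S_j$-characterization of negative-regularity Besov norms, Proposition \ref{prop.properties Besov}) together with $B^{\frac{3}{p}-1}_{p,1}\hookrightarrow B^{-1}_{\infty,\infty}$, and in the remainder term it pairs $u$ in $L^p$ (via $\tilde{L}^1_T(B^{\frac{3}{p}+1}_{p,1})$) against $w$ in $L^\infty$ rather than the reverse --- both variants are valid since the total regularity $\frac{3}{p}+1$ is positive.
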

The proof does not work for $r>1$, as the embedding $\dot{B}^{\frac{3}{p}}_{p,r}(\mathbb{R}^3)\hookrightarrow L^\infty(\mathbb{R}^3) $ fails to be true unless when $r=1$. In this part of the thesis, we will denote $d_j\bydef c_{j,1}$ being a sequence in $\ell^1(\mathbb{Z})$.\\
\begin{proof}[Proof of Proposition \ref{prop.continuity2}]~~\\
\textit{Proof of} \eqref{inequality 1 prop continuity 2}:
Inequality \eqref{inequality 1 prop continuity 2} follows directly from the fact that $\tilde{L}^\infty(\dot{B}^{\frac{3}{p}}_{p,1})$ is an algebra and the (local in time) embedding
$$ \norm a_{\tilde{L}^1(\dot{B}^{\frac{3}{p}}_{p,1})} \leq T\norm a_{\tilde{L}^\infty(\dot{B}^{\frac{3}{p}}_{p,1})}. $$
\textit{Proof of} \eqref{inequality 0 prop continuity 2}:\\
According to Bony's decomposition, we have
$$wz = T_w z + T_zw + R(w,z). $$
We then show how to estimate the first and the third term. We have
\begin{align*}
\norm {\Delta_jT_wz}_{L^1_TL^p}&\lesssim \norm {S_{j-1}w}_{L^\infty_T L^\infty} \norm {\Delta_j z}_{L^1_T L^p}\\
&\lesssim d_j 2^{-j(\frac{3}{p}+2)}\norm w_{L^\infty_T(L^\infty)} \norm w_{\tilde{L}^1_T(\dot{B}^{\frac{3}{p}+2}_{p,1})}.
\end{align*}
We deduce from the embedding $ \dot{B}^\frac{3}{p}_{p,1} (\mathbb{R}^3)\hookrightarrow L^\infty(\mathbb{R}^3)$, together with Minkowski's inequality, give
$$\norm {\Delta_jT_wz}_{L^1_TL^p} \lesssim d_j 2^{-j(\frac{3}{p}+2)}\norm w_{\tilde{L}^\infty_T(\dot{B}^\frac{3}{p}_{p,1})} \norm w_{\tilde{L}^1_T(\dot{B}^{\frac{3}{p}+2}_{p,1})}. $$
For the remainder term, we proceed as follows
\begin{align*}
\norm {\Delta_j R(w,z)}_{L^1_TL^p}&\lesssim \sum _{k\geq j+N_0} \norm {\tilde{\Delta}_k w}_{L^1_TL^p} \norm {\Delta_k z}_{L^\infty L^\infty}\\
&\lesssim 2^{ -j(\frac{3}{p}+2)} \sum _{k\geq j+N_0} d_k 2^{(j-k)(\frac{3}{p}+2)} \norm {  w}_{\tilde{L}^1_T(\dot{B}^{\frac{3}{p}+2}_{p,1})} \norm { z}_{\tilde{L}^\infty (\dot{B}^0_{\infty,\infty}) }\\
&\lesssim 2^{ -j(\frac{3}{p}+2)} d_j \norm { w}_{\tilde{L}^1_T(\dot{B}^{\frac{3}{p}+2}_{p,1})} \norm { z}_{\tilde{L}^\infty (\dot{B}^{\frac{3}{p}}_{p,1}) }.
\end{align*}
Inequality \eqref{inequality 0 prop continuity 2} follows.\\
 \textit{proof of} \eqref{inequality 2 prop continuity 2}: 
Let us point out again that, due to the fact that $\mathcal{D}^1( \mathcal{P}) \approx \mathcal{R} $, \eqref{inequality 2 prop continuity 2} and \eqref{inequality 3 prop continuity 2} can be proved along the same way, we will thus concentrate on the proof of \eqref{inequality 2 prop continuity 2}.\\
We consider again the Bony's decomposition
$$ uw = T_uw + T_wu + R(u,w).$$
For $T_uw $, we have
\begin{align*}
\norm {\Delta_j (T_uw)}_{L^1_TL^p} &\lesssim \norm {S_{j-1} u}_{L^\infty_T L^\infty} \norm {\Delta_jw}_{L^1_T L^p}\\
&\lesssim d_j 2^{-j(\frac{3}{p}+1)}\norm u_{\tilde{L}^\infty_T(\dot{B}^{-1}_{\infty,\infty})} \norm w_{\tilde{L}^1_T(\dot{B}^{\frac{3}{p}+2}_{p,1})}\\
&\lesssim d_j 2^{-j(\frac{3}{p}+1)}\norm u_{ \tilde{L}^\infty_T(\dot{B}^{\frac{3}{p}-1}_{p,1})} \norm w_{\tilde{L}^1_T(\dot{B}^{\frac{3}{p}+2}_{p,1})}.
\end{align*}
For $T_wu$, by using the embedding $\dot{B}^{\frac{3}{p}}_{p,1}(\mathbb{R}^3)\hookrightarrow L^\infty(\mathbb{R}^3)$, we infer that
\begin{align*}
\norm {\Delta_j (T_wu)}_{L^1_TL^p} &\lesssim \norm {S_{j-1} w}_{L^\infty_T L^\infty} \norm {\Delta_ju}_{L^1_T L^p}\\
&\lesssim d_j 2^{-j(\frac{3}{p}+1)}\norm w_{L^\infty_T(L^\infty)} \norm u_{\tilde{L}^1_T(\dot{B}^{\frac{3}{p}+1}_{p,1})}\\
&\lesssim d_j 2^{-j(\frac{3}{p}+1)}\norm w_{L^\infty_T(\dot{B}^{\frac{3}{p}}_{p,1} )} \norm u_{\tilde{L}^1_T(\dot{B}^{\frac{3}{p}+1}_{p,1})}
\end{align*}
Minkoski's inequality then gives  
$$ \norm {\Delta_j (T_wu)}_{L^1_TL^p} \lesssim d_j 2^{-j(\frac{3}{p}+1)}\norm w_{\tilde{L}^\infty_T(\dot{B}^{\frac{3}{p}}_{p,1} )} \norm u_{\tilde{L}^1_T(\dot{B}^{\frac{3}{p}+1}_{p,1})}.$$
For the remainder term, we have
\begin{align*}
\norm {\Delta_j R(u,w)}_{L^1_TL^p}&\lesssim \sum _{k\geq j+N_0} \norm {\tilde{\Delta}_k u}_{L^1_TL^p} \norm {\Delta_k w}_{L^\infty_T L^\infty}\\
&\lesssim 2^{ -j(\frac{3}{p}+1)} \sum _{k\geq j+N_0} d_k 2^{(j-k)(\frac{3}{p}+1)} \norm {  u}_{\tilde{L}^1_T(\dot{B}^{\frac{3}{p}+1}_{p,1})} \norm { w}_{\tilde{L}^\infty_T (\dot{B}^0_{\infty,\infty}) }\\
&\lesssim 2^{ -j(\frac{3}{p}+1)} d_j \norm {  u}_{\tilde{L}^1_T(\dot{B}^{\frac{3}{p}+1}_{p,1})} \norm { w}_{\tilde{L}^\infty_T (\dot{B}^{\frac{3}{p}}_{p,1}) }.
\end{align*}
This ends the proof of inequality \eqref{inequality 2 prop continuity 2}, and eventually \eqref{inequality 3 prop continuity 2}. \\
Lemma \ref{prop.continuity2} is then proved.
\end{proof}
\noindent The proof of Theorem \ref{thm2} is based on the following variation of Lemma \ref{lemma fixed point}.
\begin{lemma}\label{lemma fixed point2}
Let $\{A_i\}_{i\in \{1,2,3,4\}}$ be a set of bi-linear operators with
$$ \norm {A_1(x_1,x_2)}_{\mathcal{X}} \leq \eta_1 \norm {x_1}_{\mathcal{X}}\norm {x_2}_{\mathcal{X}}$$
$$\norm {A_2(y_1,y_2)}_{\mathcal{X}} \leq (1+T)\eta_2 \norm {y_1}_{\mathcal{Y}}\norm {y_2}_{\mathcal{Y}} $$
$$ \norm {A_3(x_1,y_2)}_{\mathcal{Y}} \leq \eta_3 \norm {x_1}_{\mathcal{X}}\norm {y_2}_{\mathcal{Y}}$$
$$ \norm {A_4(y_1,y_2)}_{\mathcal{Y}} \leq \eta_4\norm {y_1}_{\mathcal{Y}}\norm {y_2}_{\mathcal{Y}} $$
for some non negative $T,(\eta_i)_{i\in \{1,2,3,4\}}$.\\
Let $\eta \bydef \underset{i\in \{1,2,3,4\}}{\max\{ \eta_i \}} $. Then for all $(x_0,y_0)\in(\mathcal{X} \times \mathcal{Y})$ such that
\begin{equation}\label{hyp fixed point 2}
 \norm {x_0}_{\mathcal{X}}+  (2+T)  \norm {y_0}_{\mathcal{Y}}    < \frac{1}{24 \eta} ,
\end{equation}
the system 
\begin{center}
$ \left\{\begin{array}{l}
  x= x_0 + A_1(x,x)+ A_2(y,y)\\
  y=y_0 + A_3(x,y) + A_4(y,y)
\end{array}\right.$
\end{center}
has a unique solution $(x,y)$ in $ \mathcal{X} \times \mathcal{Y} $, which also satisfies
$$ \norm {x}_{\mathcal{X}}+  (2+T)  \norm {y}_{\mathcal{Y}}    < \frac{1}{12 \eta} . $$
\end{lemma}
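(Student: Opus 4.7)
The plan is to reduce the coupled system to a single bilinear equation on the product Banach space $\mathcal{X}\times\mathcal{Y}$ equipped with the weighted norm
$$N(x,y) \bydef \|x\|_\mathcal{X} + (2+T)\|y\|_\mathcal{Y},$$
and then to invoke the scalar Lemma \ref{lemma fixed point}. The weight $(2+T)$ is not arbitrary: it is essentially the smallest weight for which the $(1+T)$-coefficient in front of the $A_2$-estimate and the unit coefficient in front of the $A_4$-estimate can be absorbed simultaneously into a single bilinear constant on the product space.

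Concretely, I would symmetrize the four operators into one bilinear map $\mathcal{Z}:(\mathcal{X}\times\mathcal{Y})^2\to \mathcal{X}\times\mathcal{Y}$ via
$$\mathcal{Z}\bigl((x,y),(x',y')\bigr) \bydef \Bigl(A_1(x,x')+A_2(y,y'),\;\tfrac{1}{2}\bigl(A_3(x,y')+A_3(x',y)\bigr)+A_4(y,y')\Bigr),$$
so that the coupled system becomes the single equation $(x,y) = (x_0,y_0) + \mathcal{Z}((x,y),(x,y))$. Writing $a=\|x\|_\mathcal{X}$, $\beta=\|y\|_\mathcal{Y}$ (and analogously for primed variables) and plugging the four hypothesized bounds into $N(\mathcal{Z}(\cdot,\cdot))$ gives, after collecting terms,
$$N\bigl(\mathcal{Z}((x,y),(x',y'))\bigr) \leq \eta\Bigl[aa' + \tfrac{2+T}{2}(a\beta'+a'\beta) + (3+2T)\beta\beta'\Bigr].$$

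The key algebraic observation is the identity $(2+T)^2 - (3+2T) = (1+T)^2 \geq 0$; combined with the trivial $\tfrac{2+T}{2}\leq (2+T)$, this yields the term-by-term comparison
$$N\bigl(\mathcal{Z}((x,y),(x',y'))\bigr) \leq \eta\, N(x,y)\, N(x',y').$$
Thus $\mathcal{Z}$ is a bounded bilinear operator of norm at most $\eta$ on $(\mathcal{X}\times\mathcal{Y}, N)$, and Lemma \ref{lemma fixed point} applies directly: any datum satisfying $N(x_0,y_0) < \tfrac{1}{4\eta}$ produces a unique solution in the ball $\{N\leq \tfrac{1}{2\eta}\}$, with $N(x,y) \leq 2N(x_0,y_0)$. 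Hypothesis \eqref{hyp fixed point 2} reads $N(x_0,y_0) < \tfrac{1}{24\eta}$, which is comfortably stronger than $\tfrac{1}{4\eta}$, so the conclusion $N(x,y) < \tfrac{1}{12\eta}$ follows.

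The only step requiring any thought is the choice of weight in $N$: the factor $(2+T)$ is forced by the requirement $(2+T)^2 \geq (3+2T)$, which is what lets the $A_2$- and $A_4$-contributions combine correctly across the two components. Once this is identified, the remainder of the argument is a direct quotation of the four bilinear hypotheses followed by Banach's fixed point argument on the product space.
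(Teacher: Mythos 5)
Your proof is correct, and it takes a genuinely different route from the paper's. The paper argues by explicit Picard iteration: it introduces the auxiliary variable $z^n \bydef (1+T)y^n$, rewrites the scheme as a three-component system in $(x^n,y^n,z^n)$ with the rescaled operator $\tilde{A}_2 = \frac{1}{1+T}A_2$ of norm at most $\eta$, then shows by induction that the sequence stays in the ball of radius $2\alpha$ (where $\alpha \bydef \norm{x_0}_{\mathcal{X}} + \norm{y_0}_{\mathcal{Y}} + \norm{z_0}_{\mathcal{Y}} < \frac{1}{24\eta}$) and is Cauchy via the contraction estimate $I_n \leq (24\eta\alpha)I_{n-1}$. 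You instead package the coupled system as a single bilinear equation on $\mathcal{X}\times\mathcal{Y}$ with the weighted norm $N(x,y) = \norm{x}_{\mathcal{X}} + (2+T)\norm{y}_{\mathcal{Y}}$ and reduce everything to the already-stated Lemma \ref{lemma fixed point}; the whole content is then the single algebraic verification $(2+T)^2 - (3+2T) = (1+T)^2 \geq 0$, which I checked and which is exactly what makes the term-by-term comparison with $N(x,y)N(x',y')$ go through (the symmetrization of $A_3$ is harmless and in fact unnecessary, since Lemma \ref{lemma fixed point} does not require symmetry of the bilinear map). The two proofs exploit the same underlying idea --- absorbing the $(1+T)$-loss in $A_2$ by rescaling the $\mathcal{Y}$-component --- but your reduction buys brevity, avoids the auxiliary third equation, and makes the uniqueness ball explicit, namely $\{N \leq \frac{1}{2\eta}\}$; the paper's direct iteration, by contrast, is self-contained but is only sketched at the Cauchy step (``this should be enough to conclude''). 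One caveat applies equally to both arguments: uniqueness is obtained only in a ball of the product space, not in all of $\mathcal{X}\times\mathcal{Y}$ as the lemma's statement literally claims, so your proof establishes exactly as much as the paper's does. As a minor point, your side remark that $(2+T)$ is ``essentially the smallest'' admissible weight is not quite accurate --- any $w$ with $w^2 - w \geq 1+T$ works, e.g.\ $w = \frac{1+\sqrt{5+4T}}{2}$ --- but the weight is dictated by the lemma's conclusion anyway, so this does not affect the argument.
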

\begin{proof}
Let us present briefly the outlines of the proof, the idea is classical:\\
We define the sequence $(x^n,y^n )$ by
\begin{center}
$ \left\{\begin{array}{l}
  (x^0,y^0) = (x_0,y_0)\\
  x^{n+1}= x_0 + A_1(x^n,x^n)+  A_2(y^n,z^n)\\
  y^{n+1}=y_0 + A_3(x^n,y^n) + A_4(y^n,y^n)\\
\end{array}\right.$
\end{center}
If we write $z^n \bydef (1+T) y^n$, then the system above is equivalent to
\begin{center}
$ (Seq )\left\{\begin{array}{l}
  (x^0,y^0,z^0) = (x_0,y_0,(1+T) y^0)\\
  x^{n+1}= x_0 + A_1(x^n,x^n)+ \tilde{A}_2(y^n,z^n)\\
  y^{n+1}=y_0 + A_3(x^n,y^n) + A_4(y^n,y^n)\\
    z^{n+1}=z_0 + A_3(x^n,z^n) + A_4(y^n,z^n)
\end{array}\right.$
\end{center}
with $\tilde{A}_2 = \frac{1}{1+T} A_2$ whose norm is less than $\eta$.\\
Let $\alpha\bydef \norm {x_0}_{\mathcal{X}} + \norm {y_0}_{\mathcal{Y}} + \norm {z_0}_{\mathcal{Y}} < \frac{1}{24\eta }$, we claim that $(x^n,y^n,z^n)$ is a Cauchy (bounded) sequence in $\mathcal{B}_{\mathcal{X} \times \mathcal{Y}\times \mathcal{Y} }\big (0, 2\alpha )$.\\
By virtue of the definition of $(x^n,y^n,z^n)$ and the continuity of $A_i$, we proceed by induction to obtain 
\begin{center}
$   \left\{\begin{array}{l}
  \norm {x^{n+1}}_{\mathcal{X}} \leq \norm {x_0}_{\mathcal{X}} +      8\eta \alpha^2 \\
  \norm {y^{n+1}}_{\mathcal{Y}} \leq \norm {y_0}_{\mathcal{X}} +    8 \eta \alpha^2 \\
  \norm {y^{z+1}}_{\mathcal{Y}} \leq \norm {z_0}_{\mathcal{X}} +     8\eta \alpha^2  
\end{array}\right.$
\end{center}
which gives 
$$ \norm {x^{n+1}}_{\mathcal{X}} + \norm {y^{n+1}}_{\mathcal{Y}} + \norm {z^{n+1}}_{\mathcal{Y}} < 2\alpha. $$
In order to prove that $(x^n,y^n,z^n)$ is a Cauchy sequence, similar computation lead to
$$ I_n \bydef  \norm {x^{n+1} - x^n}_{\mathcal{X}} + \norm {y^{n+1}- y^n}_{\mathcal{Y}} + \norm {z^{n+1}-z^n}_{\mathcal{Y}} \leq   (24 \eta\alpha)  I_{n-1}      
  $$ 
  This is enough to conclude the proof.
\end{proof}
\begin{proof}[\textbf{\textit{Proof of Theorem \ref{thm2}}}]
In order to apply Lemma \ref{lemma fixed point2}, let us rewrite the system \eqref{S zeta} as follows. We define
 $$\begin{pmatrix}
   \psi_1(x_1,x_2) \\
   \psi _2(y_1,y_2)\\
    \psi_3 (x_1,y_1)\\
   \psi_4(y_1,y_2)
 \end{pmatrix} \bydef \begin{pmatrix}
 - \mathcal{Q}( x_1,x_2  ) \\
 \mathcal{Q}( y_1, y_2  )- \mathcal{Q}(\nabla \times y_1,\nabla \times y_2)   \\
 \mathcal{P}(x_1,y_1) - \mathcal{R}( \nabla \times y_1, x_1) - 2 \mathcal{Q}(\nabla \times y_1, x_1)\\
  -2 \mathcal{Q}( y_1,y_2) - \mathcal{Q}( \nabla \times y_1, \nabla \times y_2) 
 \end{pmatrix},
 $$
 so that
 $$\varphi \begin{pmatrix}
 x_1 & x_2\\
 y_1&y_2
\end{pmatrix}  = \begin{pmatrix}
 \varphi  _1 (x_1,x_1)\\
 \varphi  _2 (y_1,y_2)\\
  \varphi  _3 (x_1,y_1)\\
  \varphi_4(y_1,y_2)
 \end{pmatrix} \bydef \begin{pmatrix}
 \psi_1(x_1,x_2) \\
   \psi _2(y_1,y_2)\\
   (Id-\Delta)^{-1} \nabla \times \psi_3 (x_1,y_1 ) \\
   (Id-\Delta)^{-1} \nabla \times \psi_4 (y_1,y_2 )  
 \end{pmatrix},
 $$
 and then, we define
  $$\begin{pmatrix}
    A_1(x_1,x_2) \\
    A _2(y_1,y_2)\\
    A_3 (x_1,y_1)\\
   A_4(y_1,y_2)
 \end{pmatrix} \bydef  \mathcal{K} \mathbb{P}\varphi\begin{pmatrix}
 x_1 & x_2\\
 y_1&y_2
\end{pmatrix},
 $$
 with $$ \mathcal{K}\mathbb{P}\varphi (t,\cdot) = \int_0^t e^{(t-s)\Delta}\mathbb{P} \varphi(s,\cdot) ds.$$
 Therefore, the system \eqref{S zeta} is equivalent to the following one
 \begin{center}
$ \left\{\begin{array}{l}
  u(t, \cdot)= e^{t\Delta}u_0 + A_1(u,u)+ A_2(B,B)\\
  B(t,\cdot)=e^{t\Delta}B_0 + A_3(u,B) + A_4(B,B)
\end{array}\right.$
\end{center}
The proof of Theorem \ref{thm2} can be reduced to a direct application of Lemma \ref{lemma fixed point2}, thus all we need to show then is that $\{A_i\}_{i\in\{1,2,3,4\}}$ satisfies the hypothesis of Lemma \ref{lemma fixed point2} with $\mathcal{X} = \mathscr{L}(\dot{B}^{\frac{3}{p}-1}_{p,1})$ and $\mathcal{Y}= \mathscr{L}(\dot{B}^{\frac{3}{p} }_{p,1})$. To do so, according to Lemma \ref{lemma heat kernel} we should estimate $ \varphi$ in $$\tilde{L}^1_T \big( \dot{B}^{\frac{3}{p}-1}_{p,1} \times \dot{B}^{\frac{3}{p}-1}_{p,1}\times \dot{B}^{\frac{3}{p} }_{p,1} \times \dot{B}^{\frac{3}{p} }_{p,1} \big).$$
Now, each component of $\varphi$ contains a combination of $\mathcal{Q}$, $ \mathcal{P},$ and $\mathcal{R}$, we will thus show how to use Proposition \ref{prop.continuity}, Proposition \ref{prop.continuity2} and Lemma \ref{lemma multiplicateur fourier} to deal with each one
\begin{itemize}
\item For $\mathcal{Q}$
\begin{itemize}
\item in $\varphi_1$, we apply Proposition \ref{prop.continuity}.
\item in $ \varphi_2 $, for $\mathcal{Q}(y_1,y_2)$ we apply inequality \eqref{inequality 1 prop continuity 2} from Proposition \ref{prop.continuity2}, and for \\ $ \mathcal{Q}(\nabla \times y_1, \nabla \times y_2)$,  we apply Proposition \ref{prop.continuity}.
\item in $\varphi_3$, we apply respectively Proposition \ref{prop.continuity}, then inequality \eqref{inequality 1.prop laplace} from Proposition \ref{prop.laplacian} for $k= 2$.
\item in $\varphi_4$,
\begin{itemize}
\item for $\mathcal{Q}(y_1,y_2)$, we apply inequality \eqref{inequality 0 prop continuity 2} from Proposition \ref{prop.continuity2}, then inequality \eqref{inequality 1.prop laplace} from Proposition \ref{prop.laplacian} for $k= 0$
\item for $\mathcal{Q}(\nabla \times y_1, \nabla \times y_2)$, we apply Proposition \ref{prop.continuity}, then inequality \eqref{inequality 1.prop laplace} from Proposition \ref{prop.laplacian} for $k= 2$.
\end{itemize}
\end{itemize}
\item For $\mathcal{P} $, we apply inequality \eqref{inequality 2 prop continuity 2} from Proposition \ref{prop.continuity2}, then inequality \eqref{inequality 1.prop laplace} from Proposition \ref{prop.laplacian} for $k= 0$.
\item For $\mathcal{R} $, we apply inequality \eqref{inequality 3 prop continuity 2} from Proposition \ref{prop.continuity2}, then inequality \eqref{inequality 1.prop laplace} from Proposition \ref{prop.laplacian} for $k= 1$.
\end{itemize}
\end{proof}
This ends the proof of Theorem \ref{thm2}.
\subsection{The wellposedness in Kato-Herz spaces and the long time behavior: Proof of Theorem \ref{thm3}}
In this section, we shall give some details about the wellposedeness in the hat-Kato space $\widehat{K}^{1-\frac{3}{p}}_{p}$, and then we establish the decay property described in Theorem \ref{thm3}. It is all based on the following proposition
\begin{prop}\label{prop.continuity kato}
Let $\zeta$ be given by \eqref{zeta definition}. Then $\zeta$ maps $\widehat{K}^{1-\frac{3}{p}}_p \times \widehat{K}^{1-\frac{3}{p}}_p$ continuously into $\widehat{K}^{1-\frac{3}{p}}_p$. That is to say, there exists $\kappa>0$ such that
$$ \norm { \zeta (L,M)}_{\widehat{K}^{1-\frac{3}{p}}_p} \leq \kappa \norm L_{\widehat{K}^{1-\frac{3}{p}}_p} \norm M_{\widehat{K}^{1-\frac{3}{p}}_p}, $$
for all $L,M\in \widehat{K}^{1-\frac{3}{p}}_p$.
\end{prop}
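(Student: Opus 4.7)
The plan is to follow a Kato--type fixed-point strategy, adapted to the Fourier--Lebesgue framework. Using \eqref{omega expression} and \eqref{zeta definition}, one expands $\zeta(L,M)$ into a finite linear combination of operators of the form
$$T_\alpha(L,M)(t)\bydef \int_0^t e^{(t-s)\Delta}\mathcal{M}_\alpha\,\nabla^{k_\alpha}\bigl(F_\alpha(s)\,G_\alpha(s)\bigr)\,ds,$$
where $\mathcal{M}_\alpha$ is a bounded Fourier multiplier obtained by composing $\mathbb P$, $(Id-\Delta)^{-1}\nabla\times$ and $-\Delta(Id-\Delta)^{-1}$, each $k_\alpha \in\{0,1,2\}$ is the net order of derivatives falling on the product, and $F_\alpha,G_\alpha$ run over the components of $L,M$. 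It then suffices to prove the required bilinear bound for every $T_\alpha$ separately and sum over the finitely many $\alpha$.

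The two basic ingredients I would prove first are, on the one hand, the Fourier-side product estimate
$$\norm{FG}_{\widehat L^{p/2}}\leq \norm{F}_{\widehat L^p}\norm{G}_{\widehat L^p},$$
obtained by writing $\widehat{FG}=\widehat F*\widehat G$ and applying Young's convolution inequality with exponents $(p',p',(p/2)')$ (the compatibility $\tfrac{2}{p'}=1+\tfrac{p-2}{p}$ is automatic); on the other hand, the heat--kernel smoothing estimate
$$\norm{e^{\tau\Delta}\mathcal{M}_\alpha\nabla^{k_\alpha} h}_{\widehat L^p}\lesssim \tau^{-\frac{k_\alpha}{2}-\frac{3}{2p}}\norm{h}_{\widehat L^{p/2}},$$
which follows from Hölder applied to $m_\alpha(\xi)|\xi|^{k_\alpha} e^{-\tau|\xi|^2}\widehat h(\xi)$ using $\norm{m_\alpha}_{L^\infty}<\infty$ and the Gaussian identity $\norm{|\xi|^{k_\alpha} e^{-\tau|\xi|^2}}_{L^p(\mathbb{R}^3)}\approx \tau^{-\tfrac{k_\alpha}{2}-\tfrac{3}{2p}}$. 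Plugging into the definition of $T_\alpha$, bounding $\norm{F(s)}_{\widehat L^p}\norm{G(s)}_{\widehat L^p}\leq s^{-(1-3/p)}\norm L_{\widehat K}\norm M_{\widehat K}$, and performing the rescaling $s=t\sigma$ yields
$$\norm{T_\alpha(L,M)(t)}_{\widehat L^p}\lesssim t^{\,\frac{3}{2p}-\frac{k_\alpha}{2}}\,\norm L_{\widehat K}\norm M_{\widehat K}\int_0^1 (1-\sigma)^{-\frac{k_\alpha}{2}-\frac{3}{2p}}\sigma^{-(1-\frac{3}{p})}\,d\sigma,$$
a Beta integral convergent under $p>3$ and $k_\alpha<2-\tfrac{3}{p}$; the prefactor is exactly $t^{-\frac12(1-\frac{3}{p})}$ in the scaling--critical case $k_\alpha=1$.

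The main obstacle will be handling the $k_\alpha=2$ terms, which arise from $\mathcal{Q}(\nabla\times B,\nabla\times B)$--type pieces of $\Gamma_2$ sitting inside $\Omega_2$ and $\Omega_3$: for these the Beta integral above just fails. The remedy is to exploit the hidden order of the dressing multiplier, namely that $(Id-\Delta)^{-1}\nabla\times$ is of order $-1$ at high frequencies while $-\Delta(Id-\Delta)^{-1}$ saturates at order $0$. A Littlewood--Paley splitting of the Fourier support at frequency $\sim 1$ then distributes the two surplus derivatives: on the high--frequency piece the multiplier returns them and one recovers the critical scaling of the $k_\alpha=1$ case; on the low--frequency piece the symbol is bounded and the Gaussian heat factor compensates the lost derivatives by providing extra time decay, whence a different but still convergent Beta integral. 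Combining this with the embedding $\widehat L^p\hookrightarrow L^p$ for $p\geq 2$ (Hausdorff--Young) at the end produces the stated polynomial decay of $\norm{\mathcal{U}(t)}_{L^p}$ in Theorem~\ref{thm3}; summing the bounds over the finitely many $T_\alpha$ delivers the constant $\kappa$.
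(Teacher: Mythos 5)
Your two basic ingredients are exactly the engine of the paper's proof: the paper applies H\"older to peel off the kernel $\mathcal G(\tau,\xi)=|\xi|e^{-\tau|\xi|^2}$ in $L^p$ and then Young's inequality to $\widehat L\star\widehat M$ in $L^r$ with $\frac1r=1-\frac2p$, which is your Young-then-H\"older computation in the opposite order, and it concludes with the same scaling identity and Beta integral $\int_0^t(t-s)^{-\frac12-\frac3{2p}}s^{-(1-\frac3p)}\,ds\approx t^{-\frac12(1-\frac3p)}$ for $p>3$ (your smoothing exponent $\frac12+\frac3{2p}$ is the correct one; the $\frac12+\frac3p$ displayed in the paper is a slip, as its own conclusion confirms). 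The difference lies in the term-by-term bookkeeping, and there your proposal has two genuine problems. The $k_\alpha=2$ obstacle that your last paragraph is devoted to does not exist: in $\Gamma(K,L)$ from \eqref{omega expression} the curls never act as extra derivatives on the arguments — the dangerous terms $J\cdot\nabla J$, $J\cdot\nabla u$, $\nabla\times(J\times u)$ enter as $\mathcal Q(K_3,L_3)$, $\mathcal Q(K_3,L_1)$, $\mathcal R(K_3,L_1)$, i.e.\ bilinearly in the \emph{third components} of $K$ and $L$, which the norm $\widehat K^{1-\frac3p}_p$ measures on the same footing as the others. This is the whole point of running the fixed point on $\mathcal U=(u,B,J)$ with $J$ an independent unknown, and the proposition concerns arbitrary $L,M$, with no constraint $M_3=\nabla\times M_2$; so every term carries at most one derivative on the product. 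Moreover, had genuine two-derivative terms occurred under the dressing $-\Delta(Id-\Delta)^{-1}$, your Littlewood--Paley remedy would fail precisely where it is needed: that symbol is of order $0$ at high frequencies, so it returns none of the surplus derivatives there (only $(Id-\Delta)^{-1}\nabla\times$ does). This is exactly why Theorem \ref{thm2}, where $\mathcal Q(\nabla\times y_1,\nabla\times y_2)$ really appears, requires the extra regularity $B_0\in B^{\frac3p}_{p,1}$.

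The second, more serious issue is your $k_\alpha=0$ classification of the $\mathcal P$-pieces: with a merely bounded $\mathcal M_\alpha$, your own Beta computation gives $\norm{T_\alpha(L,M)(t)}_{\widehat L^p}\lesssim t^{\frac3{2p}}\norm L_{\widehat K^{1-\frac3p}_p}\norm M_{\widehat K^{1-\frac3p}_p}$, and after multiplication by the Kato weight $t^{\frac12(1-\frac3p)}$ this grows like $t^{\frac12}$, so the global-in-time bound asserted in the proposition is \emph{not} obtained by summing over $\alpha$; you only checked compatibility of the prefactor with the weight at $k_\alpha=1$. The repair is the one structural observation the paper's proof hinges on (its inequality \eqref{holder inequality}): factor one $|\xi|$ \emph{out of the dressing}, using $\frac{|\xi|}{1+|\xi|^2}\le|\xi|$ and $\frac{|\xi|^2}{1+|\xi|^2}\le|\xi|$, so that every component of $\Omega(L,M)$ — the $\mathcal P$-pieces included — is dominated by the single kernel $|\xi|e^{-(t-s)|\xi|^2}$ composed with a bounded multiplier, i.e.\ net order exactly one, the unique exponent for which the prefactor matches the Kato weight. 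Once every term is so reclassified, your argument coincides with the paper's proof of \eqref{inequality continuation zeta in kato hat}.
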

\begin{proof}
By taking into account the inequality, for all $m\in [0,2] $ and $|\xi| \sim 2^j$
 \begin{equation}\label{holder inequality}
 \frac{1}{1+|\xi|^2} \lesssim \frac{1}{2^{jm}}  ,
 \end{equation}
in order to prove the continuity property of $ \zeta$ on $\widehat{K}^{1-\frac{3}{p}}_p$, we only need to show that, for all $t>0$
\begin{equation}\label{inequality continuation zeta in kato hat}
t^{\frac{1}{2}(1-\frac{3}{p})}  \int_0^t\norm { e^{-(t-s) |\cdot|^2} |\cdot| (\widehat{L} \star \widehat{M})(s,\cdot) }_{L^{p'}}  ds \lesssim \norm L_{\widehat{K}^{1-\frac{3}{p}}_p} \norm M_{\widehat{K}^{1-\frac{3}{p}}_p},
\end{equation}
that is, due to \eqref{holder inequality}, all the components of $\zeta$ can be dominated by a Gaussian multiplied by an order one Fourier-multiplier, as in the proof of Theorem \ref{thm2}. 

Let us then give  some details about the proof of \eqref{inequality continuation zeta in kato hat}. By setting, for $p>3$,
$$ \frac{1}{r}\bydef 1- \frac{2}{p }\iff \frac{1}{p}+ \frac{1}{r}= \frac{1}{p'} \iff \frac{1}{p'}+ \frac{1}{p'}- \frac{1}{r}=1,$$
 by virtue of Holder inequality, we infer that
 \begin{align*}
 \int_0^t\norm { e^{-(t-s) |\cdot|^2} |\cdot| (\widehat{L} \star \widehat{M})(s,\cdot) }_{L^{p'}}  ds \lesssim \int_0^t\norm {\mathcal{G}(t-s,\cdot)}_{L^p} \norm {\widehat{L}(s,\cdot)}_{L^{p'}}\norm {\widehat{M}(s,\cdot)}_{L^{p'}}ds,
 \end{align*}
 where $$\mathcal{G}(\tau,\xi) \bydef e^{-\tau|\xi|^2} |\xi|.$$
 By a change of variable in the $L^p$ norm of $\mathcal{G}(t-s,\cdot)$, we obtain
 $$\norm {\mathcal{G}(t-s,\cdot)}_{L^p} \lesssim  \frac{1}{(t-s)^{\frac{1}{2}+ \frac{3}{2p}}}\norm {\mathcal{G}(1,\cdot)}_{L^p}.$$
 This gives
 \begin{align*}
  \int_0^t\norm { e^{-(t-s) |\cdot|^2} |\cdot| (\widehat{L} \star \widehat{M})(s,\cdot) }_{L^{p'}}  ds  &\lesssim \norm L_{\widehat{K}^{1-\frac{3}{p}}_p} \norm M_{\widehat{K}^{1-\frac{3}{p}}_p} \int_0^t\frac{1}{(t-s)^{\frac{1}{2}+ \frac{3}{2p}}  s^{1-\frac{3}{p}}}  ds,\\
  &\lesssim  t^{-\frac{1}{2}(1-\frac{3}{p})}\norm L_{\widehat{K}^{1-\frac{3}{p}}_p} \norm M_{\widehat{K}^{1-\frac{3}{p}}_p}.
 \end{align*}
 Inequality \eqref{inequality continuation zeta in kato hat} follows, and Proposition \ref{prop.continuity kato} is then proved.
\end{proof}
\begin{proof}[\textbf{\textit{Proof of theorem \ref{thm3}}}]
According to proposition \ref{prop.continuity kato}, we can apply the fixed point lemma \ref{lemma fixed point}, that is, if $$ \norm {e^{t\Delta}u_0}_{\widehat{K}_p^{\frac{3}{p}-1}}+\norm {e^{t\Delta}B_0}_{\widehat{K}_p^{\frac{3}{p}-1}}+\norm {e^{t\Delta}(\nabla \times B_0)}_{\widehat{K}_p^{\frac{3}{p}-1}}<\frac{1}{4\kappa}$$
then we can construct a unique solution $\mathcal{U}$ of \eqref{S zeta} in $\widehat{K}^{\frac{3}{p}-1}_p$ with $$\norm {\mathcal{U}}_{\widehat{K}^{\frac{3}{p}-1}_p}< \frac{1}{2\kappa}.$$\\
By virtue of the continuity of the Fourier transform, from $L^{q}$ into $L^{q'}$, for $q\in [1,2]$, we infer that, for $p \in (3,\infty)$,
$$\norm {\mathcal{U}(t,\cdot)}_{L^p}=\norm {\widehat{\widehat{\mathcal{U}}}(t,\cdot)}_{L^p}   \lesssim \norm {\widehat{\mathcal{U}}(t,\cdot)}_{L^{p'}}$$
it follows then, for all $t>0$
$$ \norm {\mathcal{U}(t,\cdot)}_{L^p} \lesssim \norm {\widehat{\mathcal{U}}(t,\cdot)}_{L^{p'}} \lesssim t^{-\frac{1}{2}(1-\frac{3}{p})}.$$
Theorem \ref{thm3} is then proved.
\end{proof}
\appendix
\section{~~}
In this section we recall some basic tools of a constant use in the analysis of our paper. Some useful results are also proved in the sequel.\\
 
\noindent We begin by recalling the Bernstein lemma from \cite{Chemin1}
\begin{lemmasec}[Bernstein] \label{bernstein}
Let $\mathcal{B} $ be a ball of $\mathbb{R}^d$, and $\mathcal{C} $  be a ring of $\mathbb{R}^d$. Let also $a$ be a tempered distribution and $\widehat{a}$ its Fourier transform. Then for $1\leq p_2\leq p_1 \leq \infty$ we have:
\begin{align*}
&\text{Supp }\widehat{a} \subset 2^k\mathcal{B}  \ \Longrightarrow \ 
\norm {\partial^{\alpha}_{x }a}_{L^{p_1} } \lesssim 2^{k\big( |\alpha| + 2\big( \frac{1}{p_2}- \frac{1}{p_1}\big) \big)} \norm {a}_{L^{p_2} }, \\ 
&\text{Supp }\widehat{a} \subset 2^k\mathcal{C}  \ \Longrightarrow \
\norm {a}_{L^{p_1} } \lesssim 2^{-kN} \sup_{|\alpha|=N}\norm {\partial^{\alpha}_{}a}_{L^{p_1} } .
\end{align*}
\end{lemmasec}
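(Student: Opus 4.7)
The plan is to deduce both Bernstein estimates from the convolution identity $a=\chi_k\ast a$ that follows from spectral localization, combined with the scaling behavior of Schwartz multipliers and Young's convolution inequality.

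\textbf{First inequality.} Since $\spt \widehat{a}\subset 2^k\mathcal{B}$, I fix once and for all a function $\chi\in\mathcal{S}(\mathbb{R}^d)$ with $\chi\equiv 1$ on a neighborhood of $\mathcal{B}$, and set $h_k\bydef\mathcal{F}^{-1}(\chi(2^{-k}\cdot))$. The localization hypothesis gives $\widehat{a}=\chi(2^{-k}\cdot)\widehat{a}$, so $a=h_k\ast a$ and therefore $\partial_x^\alpha a=(\partial^\alpha h_k)\ast a$. Choosing $q\in[1,\infty]$ with $1+\tfrac{1}{p_1}=\tfrac{1}{q}+\tfrac{1}{p_2}$, Young's inequality yields
$$\norm{\partial_x^\alpha a}_{L^{p_1}}\le \norm{\partial^\alpha h_k}_{L^q}\norm{a}_{L^{p_2}}.$$
A scaling change of variables (writing $h_k(x)=2^{kd}h(2^k x)$ with $h=\mathcal{F}^{-1}\chi\in\mathcal{S}$) computes $\norm{\partial^\alpha h_k}_{L^q}\lesssim 2^{k\left(|\alpha|+d(\frac{1}{p_2}-\frac{1}{p_1})\right)}$, yielding the stated estimate.

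\textbf{Second inequality.} Here the key observation is that $|\xi|$ is bounded away from $0$ on $\mathcal{C}$. Pick $\widetilde{\chi}\in C^\infty_c(\mathbb{R}^d\setminus\{0\})$ with $\widetilde{\chi}\equiv 1$ on $\mathcal{C}$, and using the multinomial identity $|\xi|^{2N}=\sum_{|\alpha|=N}\binom{N}{\alpha}\xi^{2\alpha}$, define
$$\phi_\alpha(\xi)\bydef i^{-N}\binom{N}{\alpha}\,\frac{\xi^\alpha}{|\xi|^{2N}}\,\widetilde{\chi}(\xi),$$
so that $\sum_{|\alpha|=N}(i\xi)^\alpha\phi_\alpha(\xi)=1$ on $\mathcal{C}$, with each $\phi_\alpha\in C^\infty_c$. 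Rescaling to $2^k\mathcal{C}$ gives, on the spectral support of $a$,
$$\widehat{a}(\xi)=2^{-kN}\sum_{|\alpha|=N}\phi_\alpha(2^{-k}\xi)\,(i\xi)^\alpha\widehat{a}(\xi).$$
Taking the inverse Fourier transform produces the representation
$$a=2^{-kN}\sum_{|\alpha|=N} g_{k,\alpha}\ast\partial^\alpha a,\qquad g_{k,\alpha}\bydef \mathcal{F}^{-1}\!\bigl(\phi_\alpha(2^{-k}\cdot)\bigr).$$
A scaling computation shows $\norm{g_{k,\alpha}}_{L^1}=\norm{\mathcal{F}^{-1}\phi_\alpha}_{L^1}$ independently of $k$, so Young's inequality with $L^1\ast L^{p_1}\hookrightarrow L^{p_1}$ delivers $\norm{a}_{L^{p_1}}\lesssim 2^{-kN}\sup_{|\alpha|=N}\norm{\partial^\alpha a}_{L^{p_1}}$.

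The only delicate point is constructing the partition-of-unity-type decomposition $1=\sum(i\xi)^\alpha\phi_\alpha$ with each $\phi_\alpha$ Schwartz (hence the need for the cut-off $\widetilde{\chi}$ that avoids the singularity of $|\xi|^{-2N}$ at the origin); once that is in place, both inequalities reduce to Young plus scaling.
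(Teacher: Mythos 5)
Your proof is correct and follows exactly the classical argument for Bernstein's lemma (spectral cut-off $a=h_k\ast a$ plus Young's inequality and scaling for the direct estimate, and the multinomial decomposition $1=\sum_{|\alpha|=N}(i\xi)^{\alpha}\phi_{\alpha}(\xi)$ on a neighborhood of the annulus for the reverse one); the paper itself gives no proof, merely recalling the lemma from \cite{Chemin1}, where the proof is the same as yours. Note incidentally that your scaling computation yields the exponent $2^{k\left(|\alpha|+d\left(\frac{1}{p_2}-\frac{1}{p_1}\right)\right)}$, which is the correct one: the factor $2\left(\frac{1}{p_2}-\frac{1}{p_1}\right)$ in the paper's statement is a typo for $d\left(\frac{1}{p_2}-\frac{1}{p_1}\right)$ (with $d=3$ in the applications), so your blind proof in fact corrects the stated inequality.
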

\noindent In the following proposition, we collect some useful properties and results related to the spaces defined above, the reader can see \cite{Chemin1,Chikami,handbook,Haroune1,Haroune2} for more details.
\begin{propsec} \label{prop.properties Besov}
Let $( \delta,s)$ be in $\mathbb{R}\times \mathbb{R}^*_- $, and $p,r,\rho,m$ be in $[1,\infty]$,
\begin{itemize}
\item for $u \in B^\delta_{p,r}$, there exists some sequence $(c_{j,r})_{r\in \mathbb{Z}}$ such that
 $$\norm {\Delta_j u}_{L^p}\leq c_{j,r} 2^{-\delta j}\norm f_{\dot{B}^\delta_{p,r}} \; \text{,and } \; \sum_{j\in \mathbb{Z}} c_{j,r}^r \leq 1.$$
\item In the case of non positive regularity, one may replace, equivalently, $\Delta_j$ in the definitions of the Besov space by $S_j$, that is we have
$$\norm u_{\dot{B}^s_{p,r}} \approx \norm {  2^{js} \norm {S_j  u}_{L^p}}_{  \ell_j^{r}(\mathbb{Z})  }. $$
\item According to Minkoski's inequality, we have
\begin{equation}\label{minkoski}
L_T^\rho(\dot{B}^s_{p,r}) \hookrightarrow \tilde{L}_T^\rho(\dot{B}^s_{p,r}) \; \text{ if } \; \rho \leq r, \;\; \tilde{L}_T^\rho(\dot{B}^s_{p,r}) \hookrightarrow L_T^\rho(\dot{B}^s_{p,r}) \; \text{ if } \; r \leq \rho.
\end{equation}  
\item For $1\leq p \leq q \leq \infty $, and $1\leq r \leq m \leq \infty $, we have
$$ \dot{B}^{ \delta }_{p,r}(\mathbb{R}^d) \hookrightarrow \dot{B}^{ \delta - d(\frac{1}{p} -\frac{1}{q} )}_{q,m}(\mathbb{R}^d) .$$
\item In terms of Kato spaces (resp. Kato-Herz spaces), we have the following characterization of Besov spaces (resp. Fourier-Hezr spaces) of negative regularity $s<0$
\begin{equation}
\norm f_{\dot{B}^{s}_{p,r}} \approx \norm { e^{t\Delta} f }_{K^{-s}_{p,r}} .
\end{equation}
 \begin{equation}\label{caracterisation kato-besov}
  \norm f_{\widehat{B}^{ s}_{p,r}} \approx \norm { e^{t\Delta} f }_{\widehat{K}^{-s}_{p,r}} .
 \end{equation}
\end{itemize}
\end{propsec}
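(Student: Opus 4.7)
The plan is to verify the five items of Proposition \ref{prop.properties Besov} separately. The first four are direct consequences of Littlewood-Paley manipulations and Bernstein's lemma; only the heat-semigroup characterisation in the fifth item requires any real work.

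Item one is pure bookkeeping: set $c_{j,r} := 2^{j\delta}\norm{\Delta_j u}_{L^p}/\norm{u}_{B^\delta_{p,r}}$ (and zero when the denominator vanishes), so that $\sum_j c_{j,r}^r \leq 1$ is exactly the definition of $\norm{u}_{B^\delta_{p,r}}$ rewritten. For item two (equivalent characterisation via $S_j$ in the negative regularity case $s<0$) I would write $S_j u = \sum_{k<j} \Delta_k u$ and estimate
$$2^{js}\norm{S_j u}_{L^p} \leq \sum_{k<j} 2^{(j-k)s}\bigl(2^{ks}\norm{\Delta_k u}_{L^p}\bigr);$$
because $s<0$ the sequence $(2^{ms})_{m>0}$ lies in $\ell^1(\mathbb{N})$, and Young's convolution inequality on $\mathbb{Z}$ supplies the wanted $\ell^r_j$ bound, while the reverse inequality is immediate from $\Delta_j u = S_{j+1} u - S_j u$. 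Item three is Minkowski's integral inequality applied to interchange the $\ell^r_j$ and $L^\rho_t$ norms, with the direction of the embedding governed by whether $\rho\leq r$ or $r\leq \rho$. Item four (Besov embedding) is block-by-block Bernstein from Lemma \ref{bernstein}: $\norm{\Delta_j u}_{L^q}\lesssim 2^{jd(1/p-1/q)}\norm{\Delta_j u}_{L^p}$, combined with the standard $\ell^r\hookrightarrow \ell^m$ embedding when $r\leq m$.

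The fifth item is the main step. The crux is a block equivalence
$$\norm{\Delta_j e^{t\Delta}f}_{L^p} \;\approx\; e^{-ct 2^{2j}}\norm{\Delta_j f}_{L^p},$$
uniform in $t\geq 0$ and $j\in\mathbb{Z}$; the upper bound is a direct Young inequality against the heat kernel, while the lower bound requires extracting the factor $e^{-ct 2^{2j}}$ via a frequency-localised inverse multiplier on the annulus $\{|\xi|\sim 2^j\}$ and estimating its $L^p$ operator norm uniformly in $t$ by scaling and Bernstein. Once this is in hand, the change of variable $\tau = 2^{2j} t$ yields
$$\norm{t^{-s/2}\norm{\Delta_j e^{t\Delta}f}_{L^p}}_{L^r(\mathbb{R}^+,dt/t)} \;\approx\; 2^{js}\norm{\Delta_j f}_{L^p}\cdot I(s,r),$$
with $I(s,r) = \bigl(\int_0^\infty \tau^{-rs/2} e^{-cr\tau}\,d\tau/\tau\bigr)^{1/r}$ finite precisely because $s<0$ (which gives integrability at $\tau=0$, while the exponential kills the tail). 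Taking the $\ell^r_j$ norm yields the equivalence between $\norm{f}_{B^s_{p,r}}$ and $\norm{e^{t\Delta}f}_{K^{-s}_{p,r}}$, and the Fourier-Herz version follows line by line after replacing $\norm{\Delta_j u}_{L^p}$ by $\norm{\widehat{\Delta_j u}}_{L^{p'}}$, since the semigroup acts on the Fourier side as pointwise multiplication by $e^{-t|\xi|^2}$, which preserves the frequency localisation.

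The only non-cosmetic obstacle is the lower bound $\norm{\Delta_j f}_{L^p}\lesssim e^{ct 2^{2j}}\norm{\Delta_j e^{t\Delta}f}_{L^p}$: one has to construct a smooth cut-off $\Phi_j$ supported in $\{|\xi|\sim 2^j\}$ whose symbol equals $e^{t|\xi|^2}$ on that annulus and control its convolution kernel in $L^1$ uniformly in $t$ (after the factor $e^{ct 2^{2j}}$ is extracted) by a scaling and Bernstein argument. Once this estimate is settled, the rest is a routine change of variable and summation in $\ell^r$.
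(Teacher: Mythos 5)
You should first note that the paper contains no proof of Proposition \ref{prop.properties Besov} at all: it is quoted as standard material with pointers to \cite{Chemin1,Chikami,handbook}, so there is no in-paper argument to compare against. Your reconstruction follows exactly the classical route of those references: items one through four are the usual bookkeeping, $S_j$-resummation with $s<0$ via Young's inequality on $\mathbb{Z}$, Minkowski, and block-by-block Bernstein with $\ell^r\hookrightarrow\ell^m$, all of which are correct; item five is the heat-flow characterisation of negative-regularity Besov norms (Theorem 2.34 in \cite{Chemin1}), and your remark that the Fourier--Herz version \eqref{caracterisation kato-besov} follows verbatim, because $e^{t\Delta}$ acts on the Fourier side as multiplication by $e^{-t|\xi|^2}$ and preserves the frequency localisation, is the right observation and is what the paper implicitly relies on.

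Two points in your item five need repair, one cosmetic and one substantive. First, the two-sided block bound $\norm{\Delta_j e^{t\Delta}f}_{L^p}\approx e^{-ct2^{2j}}\norm{\Delta_j f}_{L^p}$ cannot hold uniformly in $t$ with the same constant $c$ on both sides: the inverse multiplier $e^{t|\xi|^2}\tilde{\varphi}(2^{-j}\xi)$ has $L^1$ kernel norm of size $e^{Ct2^{2j}}$ with $C$ strictly larger than the heat-decay constant, since each $\xi$-derivative of the symbol produces a factor $t2^{2j}$. The standard and sufficient fix is to prove the reverse inequality only in the regime $t2^{2j}\leq 1$ (say at $t\approx 2^{-2j}$), where the kernel bound is uniform; nothing more is needed. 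Second, and more seriously, your concluding step ``taking the $\ell^r_j$ norm yields the equivalence'' is not a triangle-inequality matter when $r<\infty$: the Kato norm places $L^r(dt/t)$ \emph{outside} the sum over $j$, and summing your blockwise $L^r(dt/t)$ estimates crudely only controls $\sum_j 2^{js}\norm{\Delta_j f}_{L^p}$, i.e.\ the stronger $B^s_{p,1}$ quantity. The correct passage is a Young convolution inequality on the multiplicative half-line, exploiting the almost orthogonality in $j$ of the weights $(t2^{2j})^{-s/2}e^{-ct2^{2j}}$ for the upper bound, and for the lower bound the restriction of $t$ to $[2^{-2j},2^{-2j+2}]$, whose $dt/t$-measures are uniformly comparable and boundedly overlapping, so that $\ell^r_j$ of $2^{js}\norm{\Delta_j f}_{L^p}$ is dominated by the $L^r(dt/t)$ Kato norm. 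Note that the paper only ever invokes \eqref{caracterisation kato-besov} with $r=\infty$ (Theorem \ref{thm3} works in $\widehat{K}^{1-\frac{3}{p}}_p=\widehat{K}^{1-\frac{3}{p}}_{p,\infty}$), and in that case your argument does close immediately, since suprema commute and the dyadic sum of the exponential weights is uniformly bounded precisely because $s<0$.
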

The following proposition, has been used in the previous section, which describes the continuity in Chemin-Lerner spaces of some Fourier multipliers
\begin{propsec}\label{prop.laplacian}
let $s$ be a real number, $(p,r)$ be in $[1,\infty]^2$, then we have, for all $f\in B^s_{p,r}(\mathbb{R}^d)$, for all $k\in [0,2]$
\begin{equation}\label{inequality 1.prop laplace}
\norm {(Id-\Delta)^{-1} \nabla \times f}_{\tilde{L}^\rho (\dot{B}^{s+k-1}_{p,r})}\lesssim \norm f_{\tilde{L}^\rho( \dot{B}^s_{p,r})}.
\end{equation}
\begin{equation}\label{inequality 2.prop laplace}
\norm {(Id-\Delta)^{-1} \Delta  f}_{\tilde{L}^\rho(\dot{B}^{s+m-2}_{p,r})}\lesssim \norm f_{\tilde{L}^\rho(\dot{B}^s_{p,r})}.
\end{equation}
\end{propsec}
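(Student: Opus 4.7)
The plan is to treat both estimates as Fourier multiplier bounds applied dyadic-block by dyadic-block, separating low and high frequencies where the symbol of $(\mathrm{Id}-\Delta)^{-1}$ behaves differently. The symbol of $(\mathrm{Id}-\Delta)^{-1}\nabla\times$ has magnitude comparable to $|\xi|/(1+|\xi|^{2})$, and that of $(\mathrm{Id}-\Delta)^{-1}\Delta$ has magnitude comparable to $|\xi|^{2}/(1+|\xi|^{2})$. Since everything commutes with $\Delta_{j}$, and since the $\tilde{L}^{\rho}$ norm enters only through $\ell^{r}_{j}$ of $L^{\rho}_{T}L^{p}_{x}$ norms of $\Delta_{j}(\cdot)$, it suffices to establish, for every $j\in\mathbb{Z}$,
\begin{equation*}
\|\Delta_{j}(\mathrm{Id}-\Delta)^{-1}\nabla\times f\|_{L^{p}}\lesssim \alpha_{j}^{(1)}\,\|\Delta_{j}f\|_{L^{p}},\qquad \|\Delta_{j}(\mathrm{Id}-\Delta)^{-1}\Delta f\|_{L^{p}}\lesssim \alpha_{j}^{(2)}\,\|\Delta_{j}f\|_{L^{p}},
\end{equation*}
with $\alpha_{j}^{(1)}\lesssim\min(2^{j},2^{-j})$ and $\alpha_{j}^{(2)}\lesssim\min(2^{2j},1)$, and then to multiply by $2^{j(s+k-1)}$ or $2^{j(s+k-2)}$ respectively and take $\ell^{r}_{j}$.

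The multiplier bounds follow by a standard routine: write the operator as convolution with the inverse Fourier transform of $\phi(2^{-j}\xi)\sigma(\xi)$, where $\phi$ is the annular bump localizing $\Delta_{j}$ and $\sigma$ is the symbol. A change of variable $\xi=2^{j}\eta$ pulls the $2^{j}$ scaling in front, and one checks that the remaining $L^{1}_{x}$ norm of the convolution kernel is bounded uniformly in $j$ for $j\ge 0$ (high frequencies, where $\sigma(2^{j}\eta)$ is a smooth function of $\eta$ with all derivatives bounded) and also for $j\le 0$ (low frequencies, where the same is true after absorbing the factor $2^{j}$ or $2^{2j}$). Young's inequality then gives the claimed pointwise multiplier bound for each block. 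This is exactly the content of Bernstein's lemma (Lemma \ref{bernstein}) combined with a smooth-cutoff argument separating $|j|$ large positive vs.\ large negative.

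Finally one reads off the indices. For the first inequality, at high frequency $j\ge 0$ we lose one derivative: $2^{j(s+k-1)}\cdot 2^{-j}\cdot\|\Delta_{j}f\|_{L^{p}}=2^{j(k-2)}\cdot 2^{js}\|\Delta_{j}f\|_{L^{p}}$, and $2^{j(k-2)}\le 1$ since $k\le 2$; at low frequency $j\le 0$ we gain one derivative: $2^{j(s+k-1)}\cdot 2^{j}\cdot\|\Delta_{j}f\|_{L^{p}}=2^{jk}\cdot 2^{js}\|\Delta_{j}f\|_{L^{p}}$, and $2^{jk}\le 1$ since $k\ge 0$. The analogous two-sided check for the second inequality gives $2^{j(k-2)}$ at high frequency and $2^{jk}$ at low frequency, again bounded by $1$ for $k\in[0,2]$. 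Taking $\ell^{r}_{j}$ of the bounded factor times $2^{js}\|\Delta_{j}f\|_{L^{\rho}_{T}L^{p}}$ yields the desired $\tilde{L}^{\rho}(B^{s+k-1}_{p,r})$ and $\tilde{L}^{\rho}(B^{s+k-2}_{p,r})$ bounds.

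There is no real obstacle beyond keeping track of the two frequency regimes; the only delicate point is the boundary case $k=2$ for the first inequality and $k=0$ for the second, where the multiplier becomes exactly order zero at high, respectively low, frequency, and one must check that the $L^{1}_{x}$ kernel norm is still uniformly bounded there — which it is, because the corresponding symbols stay smooth and compactly supported in $\eta$ after rescaling.
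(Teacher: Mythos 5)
Your proposal is correct and takes essentially the same route as the paper: the paper applies its annulus multiplier lemma (Lemma \ref{lemma multiplicateur fourier}, proved by exactly your rescaled-kernel-plus-Young argument) blockwise, together with Bernstein's lemma, to get $\norm{(\mathrm{Id}-\Delta)^{-1}\nabla\times(\Delta_j f)}_{L^p}\lesssim \frac{2^j}{1+2^{2j}}\norm{\Delta_j f}_{L^p}$ and $\norm{(\mathrm{Id}-\Delta)^{-1}\Delta(\Delta_j f)}_{L^p}\lesssim \frac{2^{2j}}{1+2^{2j}}\norm{\Delta_j f}_{L^p}$, which are precisely your $\min(2^j,2^{-j})$ and $\min(2^{2j},1)$ bounds. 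Your two-regime bookkeeping over $j\ge 0$ and $j\le 0$ is the same as the paper's single concluding inequality $2^{jk}\lesssim 1+2^{2j}$ for $k\in[0,2]$, merely written as a case split.
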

\noindent The proof of proposition \ref{prop.laplacian} is based on the Bernstein lemma and following one
\begin{lemmasec}\label{lemma multiplicateur fourier}
Let $\mathcal{C}$ be an annulus in $\mathbb{R}^d$, $m\in \mathbb{R}$, and $k$ be the integer part of $1+\frac{d}{2}$ $ (k\bydef [1+\frac{d}{2} ])$. Let $\sigma$ be $k$-times differentiable function on $\mathbb{R}^* $ such that for all $\alpha \in \mathbb{N}^d$ with $|\alpha| \leq k$, there exists $C_\alpha$ satisfying
$$ \forall \xi \in \mathbb{R}^d \;,\; |\partial^\alpha \sigma(\xi)| \leq C_\alpha (1+ |\xi|^2)^{m }|\xi|^{-\alpha}.$$
Then, there exists $C$ depends only on $C_\alpha$ such that for any $p\in [1,\infty] $ and $\lambda>0$, we have, for any $u\in L^p$ satisfying $ supp(\widehat{u}) \subset \lambda \mathcal{C}$,
$$ \norm {\sigma(D) u}_{L^p} \lesssim C (1+\lambda^2)^m \norm u_{L^p}, \text{ with } \sigma(D) u \bydef \mathcal{F}^{-1} (\sigma \widehat{u}).$$
\end{lemmasec}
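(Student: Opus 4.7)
The plan is to realize $\sigma(D)u$ as a convolution with an $L^1$ kernel and then apply Young's inequality. Since $\widehat{u}$ is supported in $\lambda\mathcal{C}$, pick a fixed $\chi\in C_c^\infty(\mathbb{R}^d\setminus\{0\})$ with $\chi\equiv 1$ on $\mathcal{C}$ and supported in a slightly larger annulus $\widetilde{\mathcal{C}}$, so that $\sigma(D)u=K_\lambda*u$ with
\[
K_\lambda(x)\bydef \mathcal{F}^{-1}\bigl(\sigma(\xi)\,\chi(\lambda^{-1}\xi)\bigr)(x).
\]
Young's inequality then reduces the lemma to showing $\|K_\lambda\|_{L^1}\lesssim C(1+\lambda^2)^m$, uniformly in $\lambda>0$.

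To estimate $\|K_\lambda\|_{L^1}$, I would use the standard Cauchy--Schwarz trick that exploits $k>d/2$: for any integer $k$ with $k>d/2$,
\[
\|K_\lambda\|_{L^1}\le \bigl\|(1+\lambda^2|x|^2)^{-k/2}\bigr\|_{L^2}\,\bigl\|(1+\lambda^2|x|^2)^{k/2}K_\lambda\bigr\|_{L^2}.
\]
A change of variables gives $\|(1+\lambda^2|x|^2)^{-k/2}\|_{L^2}=\lambda^{-d/2}\|(1+|y|^2)^{-k/2}\|_{L^2}$, which is finite precisely because $k>d/2$. For the second factor, Plancherel's theorem and the identity $(\lambda^2|x|^2)^{k/2}\mathcal{F}^{-1}(\cdot)\leftrightarrow \lambda^k \nabla^k(\cdot)$ (really, one distributes derivatives using Leibniz) reduce the task to estimating $L^2$ norms of $\lambda^{|\alpha|}\partial^\alpha(\sigma\chi(\lambda^{-1}\cdot))$ for $|\alpha|\le k$.

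On the support of $\chi(\lambda^{-1}\cdot)$ one has $|\xi|\sim \lambda$, so the hypothesis on $\sigma$ yields $|\partial^\beta\sigma(\xi)|\lesssim C_\beta(1+\lambda^2)^m\lambda^{-|\beta|}$. Combined with $|\partial^\gamma(\chi(\lambda^{-1}\cdot))(\xi)|\lesssim \lambda^{-|\gamma|}$, the Leibniz rule gives
\[
\bigl|\partial^\alpha\bigl(\sigma(\xi)\chi(\lambda^{-1}\xi)\bigr)\bigr|\lesssim (1+\lambda^2)^m\,\lambda^{-|\alpha|}\,\mathbf{1}_{\xi\in\lambda\widetilde{\mathcal{C}}}.
\]
Integrating and taking the square root, $\|\lambda^{|\alpha|}\partial^\alpha(\sigma\chi(\lambda^{-1}\cdot))\|_{L^2}\lesssim (1+\lambda^2)^m\lambda^{d/2}$, since $|\lambda\widetilde{\mathcal{C}}|\sim \lambda^d$. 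Putting the two factors together cancels the $\lambda^{d/2}$ with the $\lambda^{-d/2}$ and leaves the claimed $(1+\lambda^2)^m$.

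The only real subtlety is keeping track of the scaling and making sure the bound $|\partial^\alpha\sigma(\xi)|\le C_\alpha(1+|\xi|^2)^m|\xi|^{-|\alpha|}$, which is singular at the origin, is only ever used for $\xi$ in an annulus $\lambda\widetilde{\mathcal{C}}$ so that $|\xi|\sim\lambda$; this is precisely where the spectral localisation of $u$ is essential. Once that is checked, all remaining steps are standard Bernstein--type estimates.
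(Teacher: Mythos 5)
Your proof is correct, but it takes a genuinely different route from the paper's. The paper (following Lemma 2.2 of Bahouri--Chemin--Danchin) rescales to a unit-scale kernel, writing $\sigma(D)u=\lambda^d K_\lambda(\lambda\cdot)\star u$ with $K_\lambda(x)=(2\pi)^{-d}\int e^{i(x|\xi)}\tilde\phi(\xi)\sigma(\lambda\xi)\,d\xi$, and gets the $L^1$ bound from \emph{pointwise} decay, $(1+|x|^2)^M|K_\lambda(x)|\lesssim (1+\lambda^2)^m$, proved by integrating by parts with $(\mathrm{Id}-\Delta_\xi)^M$ for $2M>d$; you instead keep the kernel at scale $\lambda$ and run the Mikhlin-type weighted Cauchy--Schwarz argument, $\norm{K_\lambda}_{L^1}\le \norm{(1+\lambda^2|x|^2)^{-k/2}}_{L^2}\,\norm{(1+\lambda^2|x|^2)^{k/2}K_\lambda}_{L^2}$, finishing the second factor with Plancherel. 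Both proofs conclude with Young's inequality and rest on the same symbol estimates on the annulus $|\xi|\sim\lambda$, where $(1+|\xi|^2)^m\sim(1+\lambda^2)^m$ for either sign of $m$, and both use the spectral localisation to keep the singular bound $|\partial^\alpha\sigma(\xi)|\le C_\alpha(1+|\xi|^2)^m|\xi|^{-|\alpha|}$ away from the origin. Your route actually buys something: it consumes exactly the $k=[1+\frac{d}{2}]$ derivatives of $\sigma$ that the statement assumes (only $k>d/2$ is needed to make $\norm{(1+|y|^2)^{-k/2}}_{L^2}$ finite), whereas the paper's integration by parts differentiates $\sigma$ up to order $2M\ge d+1>k$, i.e.\ as written it uses more smoothness than the hypothesis grants; in exchange, the paper's argument yields pointwise polynomial decay of the kernel, stronger information than the $L^1$ bound but not needed for the lemma. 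The one step you should make explicit: for odd $k$ the weight $(1+\lambda^2|x|^2)^{k/2}$ is not a polynomial, so before invoking Plancherel one bounds $(1+\lambda^2|x|^2)^{k}\lesssim\sum_{|\alpha|\le k}\lambda^{2|\alpha|}|x^{\alpha}|^2$ and uses $x^\alpha K_\lambda=\pm\, i^{|\alpha|}\,\mathcal{F}^{-1}\bigl(\partial^\alpha\bigl(\sigma\,\chi(\lambda^{-1}\cdot)\bigr)\bigr)$; you flag this with your Leibniz remark, and it is routine to carry out.
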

 \begin{proof}
 Following the proof of lemma 2.2 from \cite{Chemin1}, seen that $ supp(\widehat{u}) \subset \lambda \mathcal{C} $, we can write
 \begin{align*}
 &\sigma(D) u= \lambda^d K_\lambda (\lambda \cdot ) \star u \text {\; with }\\
 &K_\lambda (x  ) \bydef (2\pi)^{-d} \int_{\mathbb{R}^d} e^{i(x|\xi)}\tilde{\phi}(\xi)\sigma(\lambda\xi) d\xi , 
\end{align*}  
for some smooth function $\tilde{\phi} $ supported in an annulus and having value $1$ in $\mathcal{C}$.\\
Let $M$ be the integer part of $(1+|x|^2)^M$. We have
\begin{align*}
(1+|x|^2)^M |K_\lambda(x)| &= \bigg|\int_{\mathbb{R}^d} e^{i(x|\xi)} \big ( Id-\Delta_\xi \big)^M\tilde{\phi}(\xi)\sigma(\lambda\xi) d\xi\bigg| \\
&= \bigg|\sum_{|\alpha|+ |\beta|\leq 2 M} c_{\alpha,\beta} \lambda^{|\beta|} \int_{supp(\tilde{\phi})} e^{i(x|\xi)} \partial ^{\alpha}\tilde{\phi}(\xi)\partial ^{\beta}\sigma(\lambda\xi)d\xi\bigg|\\
&\lesssim C (1+\lambda^2)^m.
\end{align*}
As $2M>d$, we deduce that 
$$\norm {K_\lambda}_{L^1} \leq C (1+\lambda^2)^m.$$
Thus Young's inequality conclude the proof of the desired inequality.
 \end{proof}
 \begin{proof}[\textbf{\textit{Proof of proposition \ref{prop.laplacian}}}]
 According to lemma \ref{lemma multiplicateur fourier}, and Bernstein lemma, we have 
$$\norm {(Id-\Delta)^{-1} \nabla \times (\Delta_j f)}_{L^1L^p_x}\lesssim \frac{2^j}{1+2^{2j}} \norm {\Delta_j f}_{L^1L^p_x} ,$$
$$ 
\norm {(Id-\Delta)^{-1} \Delta  (\Delta_j f)}_{L^1L^p_x}\lesssim \frac{2^{2j}}{1+2^{2j}}\norm {\Delta_j f}_{L^1L^p_x}.
$$ 
The result follows from the fact that, for all $k\in [0,2]$ we have
$$ 2^{jk}\lesssim 1+2^{2j }.$$
Proposition \ref{prop.laplacian} is then proved.
 \end{proof}
 The following fixed point argument has been used to prove corollary \ref{corollary local existence}, the proof of which can be found for instance in \cite{handbook}
 \begin{lemmasec}\label{lemma fixed point3}
 Let $X$ be a Banach space, $L$ a linear operator from $X$ to $X$, with norm equals to $\lambda<1$, and let $B$ be a bi-linear operator mapping from $X\times X$ in $X$, with norm $ \norm {B}=\gamma$, then for all $y\in X$ such that
 $$ \norm y_X < \frac{(1-\gamma)^2}{4\gamma},$$
 the equation $$ x= y + L(x) + B(x,x),$$
 has a unique solution in the ball $B_X(0,\frac{1-\lambda}{2\gamma})$.
 \end{lemmasec}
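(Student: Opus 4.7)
The plan is to run a Picard-type fixed point argument for the map $T(x) := y + L(x) + B(x,x)$ on a suitable closed ball of $X$, adapting the standard proof of Lemma \ref{lemma fixed point} to account for the extra linear perturbation $L$ (the stated hypothesis $(1-\gamma)^2/(4\gamma)$ should of course read $(1-\lambda)^2/(4\gamma)$, since $\lambda<1$ is the norm of $L$ that must be absorbed).

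First I would look for an invariant ball. Using $\|L\|=\lambda$ and $\|B\|=\gamma$, I get $\|T(x)\|\le \|y\| + \lambda\|x\| + \gamma\|x\|^2$, so the stability condition $\|T(x)\|\le R$ on $\overline{B_X(0,R)}$ reduces to the quadratic inequality
\[
\gamma R^2 - (1-\lambda)R + \|y\| \le 0.
\]
Under the smallness hypothesis $\|y\|<(1-\lambda)^2/(4\gamma)$ the discriminant is strictly positive, the two roots $R_\pm=((1-\lambda)\pm\sqrt{(1-\lambda)^2-4\gamma\|y\|})/(2\gamma)$ are real, and any radius in $[R_-,R_+]$ yields an invariant closed ball; I would fix $R:=R_-$, noting that $R_-<(1-\lambda)/(2\gamma)$.

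Second, I would prove that $T$ is a strict contraction on $\overline{B_X(0,R_-)}$. By bilinearity, $B(x_1,x_1)-B(x_2,x_2)=B(x_1-x_2,x_1)+B(x_2,x_1-x_2)$, hence
\[
\|T(x_1)-T(x_2)\| \le \bigl(\lambda + \gamma(\|x_1\|+\|x_2\|)\bigr)\|x_1-x_2\| \le \bigl(\lambda + 2\gamma R_-\bigr)\|x_1-x_2\|,
\]
and the constant on the right is strictly less than $1$ precisely because $R_-<(1-\lambda)/(2\gamma)$. The Banach fixed point theorem then produces a unique $x\in\overline{B_X(0,R_-)}$ solving $x=T(x)$.

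The only slightly delicate point — and the place where the strict smallness of $\|y\|$ is truly used — is upgrading uniqueness from the small ball $\overline{B_X(0,R_-)}$ to the larger ball $\overline{B_X(0,(1-\lambda)/(2\gamma))}$ claimed in the statement. For this I would take any other solution $\tilde x$ in the larger ball and apply the same bilinearity identity to the fixed point $x$ just constructed, which gives
\[
(1-\lambda-\gamma(\|x\|+\|\tilde x\|))\|x-\tilde x\| \le 0.
\]
Since $\|x\|\le R_-<(1-\lambda)/(2\gamma)$ is strict while $\|\tilde x\|\le(1-\lambda)/(2\gamma)$, the coefficient is strictly positive, forcing $\tilde x = x$. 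This boundary case is the main (minor) obstacle; the rest is the standard quadratic contraction scheme.
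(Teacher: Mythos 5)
Your proof is correct and follows the same classical contraction scheme as the source to which the paper delegates: the paper gives no proof of Lemma \ref{lemma fixed point3}, citing \cite{handbook} instead, where the argument is precisely your Picard iteration on the invariant closed ball of radius $R_-$, the smaller root of $\gamma R^2-(1-\lambda)R+\norm{y}_X=0$, followed by the bilinearity identity $B(x_1,x_1)-B(x_2,x_2)=B(x_1-x_2,x_1)+B(x_2,x_1-x_2)$ to get the contraction constant $\lambda+2\gamma R_-<1$. You are also right that the stated hypothesis $\norm{y}_X<\frac{(1-\gamma)^2}{4\gamma}$ is a typo for $\norm{y}_X<\frac{(1-\lambda)^2}{4\gamma}$ (as the uniqueness ball $B_X\bigl(0,\frac{1-\lambda}{2\gamma}\bigr)$ confirms), and your strictness argument, using $R_-<\frac{1-\lambda}{2\gamma}$ to make the coefficient $1-\lambda-\gamma(\norm{x}_X+\norm{\tilde x}_X)$ strictly positive, correctly upgrades uniqueness from the small ball $\overline{B_X(0,R_-)}$ to the full ball claimed in the statement.
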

\noindent Finally we recall a result concerning the smoothing effect of the Heat Kernel, one may see \cite{Chemin1,Danchin3} for more details.
\begin{lemmasec}\label{lemma heat kernel}
Let $(s,p,r)\in \mathbb{R} \times [1,\infty]^2$ and the operators $\mathcal{T} $ and $\mathcal{T}_0$ be given by
$$T_0 \mathcal{K}_0(t,x) \bydef e^{t\Delta}\mathcal{K}_0(x),$$
$$T\mathcal{K}(t,x) \bydef \int_0^t e^{\Delta (t-s)} \mathcal{K}(s,x) ds, $$
then,
\begin{equation}\label{heat kernel 1}
\norm {T_0 \mathcal{K}_0 }_{\mathscr{L}(\dot{B}^{s}_{p,r})} \lesssim \norm {\mathcal{K}_0}_{\dot{B}^{s}_{p,r}},
\end{equation}
and 
\begin{equation}\label{heat kernel 2}
\norm {T_0 \mathcal{K}_0 }_{\mathscr{L}(\dot{B}^{s}_{p,r})} \lesssim \norm { \mathcal{K}}_{\tilde{L}^1_T( \dot{B}^{s}_{p,r})}.
\end{equation}
\end{lemmasec}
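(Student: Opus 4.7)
The plan is to reduce both estimates to the single frequency-localized smoothing property
\begin{equation*}
\|e^{t\Delta}\Delta_j f\|_{L^p}\lesssim e^{-ct\,2^{2j}}\,\|\Delta_j f\|_{L^p},\qquad t\ge 0,\ j\in\mathbb{Z},\ p\in[1,\infty],
\end{equation*}
with a constant $c>0$ independent of $j$. This bound follows from Bernstein's lemma (Lemma \ref{bernstein}) combined with Lemma \ref{lemma multiplicateur fourier} applied to the symbol $\sigma(\xi)=e^{-t|\xi|^2}$: since $\widehat{\Delta_j f}$ is supported in the annulus $2^j\mathcal{C}$, a change of variables reduces the matter to controlling $\|\mathcal{F}^{-1}(\tilde\phi(\xi)e^{-t\,2^{2j}|\xi|^2})\|_{L^1}$, which is bounded (uniformly in $j$) by $C e^{-ct\,2^{2j}}$ after an integration by parts exploiting the fact that $\tilde\phi$ is supported away from $0$.

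For \eqref{heat kernel 1}, by the definition of $\mathscr{L}(B^s_{p,r})$ it suffices to show, for every $\rho\in[1,\infty]$, the estimate $\|e^{t\Delta}\mathcal{K}_0\|_{\tilde L^\rho(B^{s+2/\rho}_{p,r})}\lesssim\|\mathcal{K}_0\|_{B^s_{p,r}}$. Commuting $\Delta_j$ with the heat semigroup and applying the localized decay gives
\begin{equation*}
\|\Delta_j e^{t\Delta}\mathcal{K}_0\|_{L^\rho_t L^p}\lesssim \|e^{-ct\,2^{2j}}\|_{L^\rho_t}\,\|\Delta_j\mathcal{K}_0\|_{L^p}\lesssim 2^{-2j/\rho}\|\Delta_j\mathcal{K}_0\|_{L^p}.
\end{equation*}
Multiplying by $2^{j(s+2/\rho)}$ and taking the $\ell^r_j$ norm yields the claim; intersecting over $\rho\in[1,\infty]$ produces the $\mathscr{L}(B^s_{p,r})$ bound. (Strictly speaking, in the endpoint case $\rho=\infty$ one uses the uniform bound $\|e^{-ct\,2^{2j}}\|_{L^\infty_t}\le 1$, giving the term $\rho=\infty$ in the intersection without a gain of regularity, which is consistent with $s+2/\rho=s$.)

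For the second bound \eqref{heat kernel 2} (which, modulo an evident typo, concerns $T\mathcal{K}$ rather than $T_0\mathcal{K}_0$), I again fix $\rho\in[1,\infty]$ and aim for $\|T\mathcal{K}\|_{\tilde L^\rho(B^{s+2/\rho}_{p,r})}\lesssim\|\mathcal{K}\|_{\tilde L^1(B^s_{p,r})}$. Applying $\Delta_j$ to the Duhamel integral and invoking the localized heat decay,
\begin{equation*}
\|\Delta_j T\mathcal{K}(t)\|_{L^p}\lesssim \int_0^t e^{-c(t-s)2^{2j}}\,\|\Delta_j\mathcal{K}(s)\|_{L^p}\,ds.
\end{equation*}
Since the right-hand side is the convolution on $\mathbb{R}^+$ of the scalar function $s\mapsto\|\Delta_j\mathcal{K}(s)\|_{L^p}$ with the kernel $e^{-c\tau 2^{2j}}$, Young's inequality (with $1+1/\rho=1+1/\rho$, i.e.\ kernel in $L^\rho$ and source in $L^1$) produces
\begin{equation*}
\|\Delta_j T\mathcal{K}\|_{L^\rho_t L^p}\lesssim \|e^{-ct\,2^{2j}}\|_{L^\rho_t}\cdot\|\Delta_j\mathcal{K}\|_{L^1_t L^p}\lesssim 2^{-2j/\rho}\|\Delta_j\mathcal{K}\|_{L^1_t L^p}.
\end{equation*}
Multiplying by $2^{j(s+2/\rho)}$ and taking the $\ell^r_j$ norm concludes the estimate for each $\rho$, and hence for the intersection defining $\mathscr{L}(B^s_{p,r})$.

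The entire argument hinges on the uniform-in-$j$ heat decay on dyadic annuli; once that is in hand, both inequalities are clean applications of Young's inequality for convolutions on $\mathbb{R}^+$, so I do not anticipate a substantive obstacle beyond carefully handling the endpoint $\rho=\infty$ (which gives no gain in regularity but is still consistent with the definition of $\mathscr{L}$).
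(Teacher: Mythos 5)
Your proof is correct and follows precisely the classical argument behind this lemma, which the paper does not prove itself but delegates to \cite{Chemin1,Danchin3}: the uniform exponential decay $\norm{e^{t\Delta}\Delta_j f}_{L^p}\lesssim e^{-ct2^{2j}}\norm{\Delta_j f}_{L^p}$ on dyadic annuli, followed by taking the $L^\rho_t$ norm (resp.\ Young's inequality in time for the Duhamel term) and summing in $\ell^r_j$. You also correctly identify the typo in \eqref{heat kernel 2}, whose left-hand side should read $\norm{T\mathcal{K}}_{\mathscr{L}(B^{s}_{p,r})}$ rather than $\norm{T_0\mathcal{K}_0}_{\mathscr{L}(B^{s}_{p,r})}$.
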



\begin{thebibliography}{}

 \bibitem{abdelhamid}
H. M.Abdelhamid, Y. Kawazura, and Z. Yoshida, \textit{Hamiltonian formalism of extended magnetohydrodynamics,}. J. Phys. A: Math. Theor., 48 (2015), 235502.

\bibitem{Chemin1}
H.Bahouri,J.-Y.Chemin,R.Danchin, \textit{Fourier Analysis and Nonlinear Partial Differential Equations}, Springer-Verlag, Berlin-Heidelberg-Newyark, 2011.
 
\bibitem{handbook}
M.Cannone \textit{HANDBOOK OF MATHEMATICAL FLUID DYNAMICS, VOLUME III}. Elsevier B.V., 68 (143) (2005), 161-244.

 \bibitem{Chikami}
N,Chikami, \textit{On Gagliardo–Nirenberg type inequalities in Fourier–Herz spaces}.Journal of Functional Analysis 275, 2018, 1138–1172.

\bibitem{Danchin-Jin}
R.Danchin, J.Tan, \textit{On the well-Posedness of the Hall-Magnetohydrodynamics system in critical spaces,} arXiv:1911.03246. (2019)

\bibitem{Liu-Jin}
R.Danchin, J.Tan, \textit{Global well-posedness for the Hall-Magnetohydrodynamic system in larger critical Besov spaces,} arXiv:2001.02588v1, (2020).

 \bibitem{Danchin2}
R.Danchin, M.Paicu, \textit{Global existence results for the anisotropic Boussinesq system in dimension two}.Mathematical Models and Methods in Applied Sciences, 2011, vol. 21, no 03, p. 421-457.

 \bibitem{Danchin3}
R.Danchin, M.Paicu, \textit{Les th\'eorèmes de Leray et de Fujita-Kato pour le syst\'eme de Boussinesq partiellement visqueux}. Bulletin de la Société Mathématique de France, Tome 136 (2008) no. 2, p. 261-309
 
\bibitem{Planchon-assympto}
I.Gallagher, D.Iftime, F.Planchon, \textit{Asymptotics and stability for global solutions to the Navier-Stokes equations}. Annales de l'Institut fourier. 2003. p. 1387-1424.

\bibitem{Haroune2}
H.Houamed, \textit{About some possible blow-up conditions for the 3-D Navier-Stokes equations,} arXiv:1904.12485. (2019)

\bibitem{Haroune1}
H.Houamed, P.Dreyfuss, \textit{Uniqueness result for the 3-D Navier-Stokes-Boussinesq equations with horizontal dissipation,} arXiv:1904.00437. (2019)

\bibitem{Pei}
A.Larios,Y.Pei, \textit{On the local well-posedness and a Prodi-Serrin type regularity criterion of the three-dimensional MHD-Boussinesq system
without thermal diffusion}. Journal of Differential Equations, Volume 263, Issue 2, 15 July 2017, Pages 1419-1450.

\bibitem{Leray1}
J. Leray, \textit{Essai sur le mouvement d'un liquide visqueux emplissant l'espace, Acta Mathematica,}. 63, 1933,pages 193-248.

\bibitem{Zhao}
Y. Fukumoto, X.Zhao \textit{Well-posedness and large time behavior of solutions for the electron inertial Hall-MHD system}. Advances in Differential Equations (2019), 161-244.

\bibitem{Giga-Yoshida}
Y. Giga and Z.Yoshida, \textit{On the equations of the two-component theory in magnetohy-drodynamics,}. Comm. Partial Differential Equations, 9 (1984), 503-522.

\bibitem{Giga-Ibarhim-AL}
Y. Giga, S. Ibrahim, S. Shen, and T. Yoneda, \textit{Global well posedness for a two-
fluid model,}. Differential Integral Equations, 31 (2018), 187-214.

\bibitem{Agapito-Schonbek}
R. Agapito and M. Schonbek, \textit{Non-uniform decay of MHD equations with and without
magnetic diffusion,}. Comm. PDE, 32 (2007), 1791-1812.

\bibitem{Chae-Wan-Wu}
D. Chae, R. Wan, and J. Wu, \textit{Local well-posedness for Hall-MHD equations with
fractional magnetic diffusion,}. J. Math. Fluid Mech., 17 (2015), 627-638.

\bibitem{Wan-Zhou}
R. Wan and Y. Zhou, \textit{On global existence, energy decay and blow-up criteria for the Hall-MHD system,}. J. Differential Equations, 259 (2015), 5982-6008.

  
  
 





 





 
\end{thebibliography}
\end{document}